\newtheorem{definition}{Definition}[section]
\newtheorem{theorem}{Theorem}[section]
\newtheorem{lemma}{Lemma}[section]
\newtheorem{proposition}{Proposition}[section]
\newtheorem{corollary}[theorem]{Corollary}
\newtheorem{remark}{Remark}[section]
\begin{document}
\title{Structure of the Kuranishi Spaces of pairs of K\"ahler manifolds and Polystable Higgs bundles}
\author{Takashi Ono\thanks{Department of Mathematics, Graduate School of Science, Osaka University, Osaka, Japan, u708091f@ecs.osaka-u.ac.jp}}
\date{}
\maketitle
\begin{abstract}
Let $X$ be a compact K\"ahler manifold and $(E,\overline\partial_E,\theta)$ be a Higgs bundle over it. We study the structure of the Kuranishi space for the pair $(X, E,\theta)$ when the Higgs bundle admits a harmonic metric or equivalently when the Higgs bundle is polystable and the Chern classes are 0. Under such assumptions, we show that the Kuranishi space of the pair  $(X,E,\theta)$ is isomorphic to the direct product of the Kuranishi space of $(E,\theta)$ and the Kuranishi space of $X$. Moreover, when $X$ is a Riemann surface and  $(E,\overline\partial_E,\theta)$ is stable and the degree is 0, we show that the deformation of the pair  $(X,E,\theta)$ is unobstructed and calculate the dimension of the Kuranishi space. 
\end{abstract}
\section{Introduction}
Let $X$ be a complex manifold. A Higgs bundle $(E,\overline\partial_{E},\theta)$ over $X$ is a pair such that $(i)$ $(E,\overline\partial_E)$ is a holomorphic bundle over $X$, $(ii)$ $\theta$ is a holomorphic 1-form which takes value in $\mathrm{End}E$ and $\theta\wedge\theta=0$. We call a pair $X$ and $(E,\overline\partial_E,\theta)$ a \textit{holomoprhic-Higgs pair}.  In our previous paper \cite{O}, we studied the deformation problem of holomorphic-Higgs pairs differential geometrically: we studied the deformation problem when $X$ and $(E,\overline\partial_{E},\theta)$ deform simultaneously. We constructed the DGLA $(L,[,]_L, d_L)$ which governs the deformation differential geometrically, and constructed the Kuranishi space $Kur_{(X, E,\theta)}$. The Kuranishi space is an analytic space such that it contains all information of small deformations of the given holomorphic-Higgs pair. See section \ref{DGLA} for the details of the DGLA  $(L,[,]_L, d_L)$. \par
There are many interesting works on the deformation of pairs. The deformation problem of pairs $(X, E)$ where $X$ is a complex manifold and $E$ is a holomorphic bundle over $X$ was studied in \cite{CS, H}. In those works, the DGLA  and the Kuranishi space were constructed. When $X$ is a smooth projective variety and $E$ is a coherent sheaf on it, the deformation problem of pair was studied in \cite{IM}. The deformation of the holomorphic-Higgs pair was studied algebraically in \cite{Ma} and the controlling DGLA was obtained. The difference between our previous work \cite{O} and her work \cite{Ma} is the way to construct the DGLA. In \cite{Ma}, it was constructed purely algebraically, and on the other hand, in \cite{O}, it was constructed differential geometrically.\par
In this paper, we study the structure of the Kuranishi space. We study the structure of the Kuranishi space when $X$ is a compact K\"ahler manifold and the Higgs bundle $(E,\overline\partial_{E},\theta)$ admits a \textit{harmonic metric} $K$. From now on, we assume $X$ to be a compact K\"ahler manifold. \par
Before we recall the notion of the harmonic metrics, we state the main result of this paper. 
 Let $Kur_X$ be the Kuranishi space of $X$, $Kur_{(E,\theta)}$ be the Kuranishi space of the Higgs bundle $(E,\theta)$, and $Kur_{(E, D)}$ be the Kuranishi space of the flat bundle $(E, D).$ The flat bundle $D$ is obtained from the Higgs bundle and the harmonic metric.
  \begin{theorem}[Theorem \ref{main}]\label{1}
  Let $(X,\omega)$ be a compact K\"ahler manifold, $(E,\overline\partial_E,\theta)$ be a Higgs bundle over $X$ and, $K$ be a harmonic metric. Then
\begin{align*}
(Kur_{(X,E,\theta)},0)&\simeq(Kur_{(E,\theta)}\times Kur_X,0),\\
(Kur_{(X,E,\theta)},0)&\simeq(Kur_{(E,D)}\times Kur_X,0)
\end{align*}
holds as germs of analytic spaces.
\end{theorem}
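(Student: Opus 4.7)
The plan is to use the Hodge theory induced by the harmonic metric $K$, in the spirit of Simpson's theory of harmonic bundles, to decompose the Kuranishi construction of the pair into the direct sum of the Kuranishi constructions of $X$ and $(E,\theta)$ separately. The argument has three main parts: setting up Hodge theory on the DGLA $L$, showing that mixed brackets are exact on harmonic representatives, and then assembling the Kuranishi family.

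First, I would work with the DGLA $(L,[,]_L,d_L)$ from \cite{O}, which combines the Kodaira-Spencer complex $A^{0,\bullet}(T_X)$ with the Higgs-Dolbeault complex $(A^{\bullet,\bullet}(\mathrm{End}(E)),D'')$, where $D'' = \overline\partial_E + \theta$, coupled by the natural action of Kodaira-Spencer classes on $\theta$ and $\overline\partial_E$. Using $\omega$ together with $K$, one introduces $L^2$-adjoints and a Laplacian $\Delta_L$. Simpson's K\"ahler identities for harmonic bundles yield $\Delta_{D''} = \Delta_{D'} = \tfrac{1}{2}\Delta_D$ on $A^{\bullet,\bullet}(\mathrm{End}(E))$, while the classical K\"ahler identities give a Hodge decomposition on $A^{0,\bullet}(T_X)$. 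Combining them produces a Hodge decomposition of $L$ whose harmonic subspace splits as $\mathcal{H}^k(L) \simeq \mathcal{H}^k_{\overline\partial}(T_X) \oplus \mathcal{H}^k_{D''}(\mathrm{End}(E))$.

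The key analytic step is to show that for $\eta \in \mathcal{H}^1_{\overline\partial}(T_X)$ and $\phi \in \mathcal{H}^1_{D''}(\mathrm{End}(E))$, the bracket $[\eta,\phi]_L$ is $d_L$-exact, i.e.\ its harmonic projection vanishes. This is where the harmonic metric plays its essential role: the K\"ahler identities on $X$ and Simpson's identities combine to force the cross-terms (coming from the action of Kodaira-Spencer classes on $\theta$ and $\overline\partial_E$) to cancel against $d_L^*$-exact contributions. Once this vanishing is established, the standard Kuranishi iteration using the Green operator for $\Delta_L$ shows that the Maurer-Cartan equation decouples: the obstruction map $\mathrm{ob}:\mathcal{H}^1(L)\to \mathcal{H}^2(L)$ splits as $\mathrm{ob}_X \oplus \mathrm{ob}_{(E,\theta)}$, and its zero locus is the product of the individual Kuranishi spaces, proving the first isomorphism.

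For the second isomorphism, I would invoke the non-abelian Hodge correspondence: the harmonic metric identifies $\mathcal{H}^\bullet_{D''}(\mathrm{End}(E))$ with $\mathcal{H}^\bullet_D(\mathrm{End}(E))$ and, applied at the level of DGLAs, produces a quasi-isomorphism between the deformation DGLA of $(E,\theta)$ and that of $(E,D)$, so that $Kur_{(E,\theta)} \simeq Kur_{(E,D)}$ as germs; substituting this into the first isomorphism gives the second. The main obstacle is the vanishing of the harmonic projection of mixed brackets: this requires a careful manipulation using the explicit form of the coupling in $L$ constructed in \cite{O} together with the Simpson and K\"ahler identities. Everything else then follows from standard DGLA and Kuranishi-type arguments.
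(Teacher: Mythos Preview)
Your proposal has the right ingredients (Simpson's K\"ahler identities, the harmonic metric, the idea that the two pieces should decouple), but the mechanism you describe has a genuine gap at the step where you pass from ``mixed brackets are $d_L$-exact on harmonic representatives'' to ``the Kuranishi iteration decouples.'' The Kuranishi obstruction $H[\epsilon(t),\epsilon(t)]_L$ is a full power series in $t$, built from iterated applications of $d_L^*G_L$ and $[\,,\,]_L$. Even if $H[\eta,\phi]_L=0$ for harmonic $\eta\in\mathcal H^1_{\overline\partial}(TX)$ and $\phi\in\mathcal H^1_{D''}(\mathrm{End}E)$, the second-order correction $d_L^*G_L[\eta,\phi]_L$ is typically nonzero and feeds back into higher brackets, mixing the two factors at every subsequent order. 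Exactness of the first-order mixed bracket is simply not enough to force the obstruction map to split as $\mathrm{ob}_X\oplus\mathrm{ob}_{(E,\theta)}$; what you would really need is that \emph{all} higher Massey-type products between the two sectors vanish, i.e.\ that $L$ is quasi-isomorphic as a DGLA to the direct sum. There is also a preliminary issue you glide over: because $d_L$ has an off-diagonal term (in the harmonic setting, $d_L(A,\phi)=(D''A+D'_K(\phi\lrcorner\theta),\overline\partial_{TX}\phi)$), the splitting $\mathcal H^k(L)\simeq\mathcal H^k_{\overline\partial}(TX)\oplus\mathcal H^k_{D''}(\mathrm{End}E)$ is not automatic and itself requires the $D'_KD''$-lemma.

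The paper resolves both problems at once by working at the level of DGLAs rather than at the level of the Kuranishi iteration. It first observes that on the harmonic bundle $d_L$ simplifies to the form above, then builds an explicit zigzag of DGLA quasi-isomorphisms
\[
L \;\longleftarrow\; \ker D'_K \oplus A^*(TX) \;\longrightarrow\; \mathbb H^*_{D'_K}\oplus A^*(TX) \;\longleftarrow\; \cdots \;\longrightarrow\; (A^*(\mathrm{End}E),D)\oplus (A^*(TX),\overline\partial_{TX}),
\]
using the $D'_KD''$-lemma and Simpson's formality for $(A^*(\mathrm{End}E),D'',D'_K)$. The crucial point is that on $\ker D'_K$ the cross terms of $[\,,\,]_L$ are $D'_K$-exact, so they die under the projection to $\mathbb H^*_{D'_K}$, and this kills the coupling at the DGLA level rather than merely at first order. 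One then invokes the Goldman--Millson homotopy invariance theorem, which says that quasi-isomorphic analytic DGLAs have isomorphic Kuranishi germs; this packages the ``decoupling at all orders'' that your direct approach lacks. The second isomorphism follows from the same zigzag with $D$ replaced by $D''$, exactly the formality mechanism you invoke for $Kur_{(E,\theta)}\simeq Kur_{(E,D)}$, so that part of your plan is fine.
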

As we explain afterward, the existence of harmonic metrics is related to the stability of the Higgs bundles. Theorem \ref{1} predicts that once we construct a moduli space of a pair of a compact K\"ahler manifold and a polystable Higgs bundle with vanishing Chern classes, such moduli space should locally decompose to the direct product of the Kuranishi space of the manifold and the Kuranishi space of the Higgs bundle which we cannot expect globally. The moduli space of pairs of K\"ahler manifold and stable bundles was considered in \cite{H, ST}.  However the author couldn't find a work that deals with pairs of K\"ahler manifolds and stable Higgs bundles.\par 
We have some consequences from Theorem \ref{1} for specific cases. Let $M$ be a Riemann surface with genus $g \ge 2$ and $(E,\overline\partial_E,\theta)$ be a stable Higgs bundle of degree 0. Under these assumptions, each deformations of $M$ and $(E,\overline\partial_E,\theta)$ are unobstructed and the dimensions of  $Kur_X$ is  $3g-3$ and $Kur_{(E,\theta)}$  is $2+r^2(2g-2)$ \cite{MK, N}. Here $r$ is the rank of $E$.  The following is straightforward from Theorem \ref{1}.
\begin{corollary}[Corollary \ref{sub}]
Let $M$ be a Riemann surface with genus $g \ge 2$ and $(E,\overline\partial_E,\theta)$ be a stable Higgs bundle of degree 0. Then the deformation of pair $(M, E,\theta)$ is unobstructed. Moreover, $Kur_{(M,E,\theta)}$ is a complex manifold and its dimension is $g(2r^2+3)-2r^2-1$.
\end{corollary}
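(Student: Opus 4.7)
The plan is to apply Theorem \ref{1} and then reduce everything to the dimension count. First I would verify that the hypotheses of Theorem \ref{1} are satisfied. Since $(E,\overline\partial_E,\theta)$ is assumed stable of degree $0$ on a compact Riemann surface, all Chern classes vanish, and by the theorem of Hitchin--Simpson (in the rank--one curve case of Hitchin, extended by Simpson) such a Higgs bundle is polystable with vanishing Chern classes and hence admits a harmonic metric $K$. Thus Theorem \ref{1} applies to the pair $(M,E,\theta)$ and gives an isomorphism of germs
\begin{equation*}
(Kur_{(M,E,\theta)},0)\simeq (Kur_{(E,\theta)}\times Kur_M,0).
\end{equation*}

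Next I would recall the two known smoothness results. For $g\ge 2$, the Kuranishi space $Kur_M$ of a compact Riemann surface is smooth of complex dimension $3g-3$ by Kodaira--Spencer theory (cited here via \cite{MK}); the obstruction space $H^2(M,T_M)$ vanishes for dimensional reasons. For a stable Higgs bundle of degree $0$ on $M$, Nitsure's result \cite{N} gives that $Kur_{(E,\theta)}$ is smooth of complex dimension $2+r^2(2g-2)$; again the obstruction group vanishes on a curve.

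Since a product of two smooth germs is a smooth germ whose dimension is the sum, the isomorphism above shows that $Kur_{(M,E,\theta)}$ is a complex manifold at $0$, hence the deformation of the pair $(M,E,\theta)$ is unobstructed. For the dimension I would simply add:
\begin{equation*}
\dim Kur_{(M,E,\theta)} = (3g-3) + \bigl(2 + r^2(2g-2)\bigr) = 2gr^2 + 3g - 2r^2 - 1 = g(2r^2+3) - 2r^2 - 1.
\end{equation*}

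There is essentially no hard step here: the content of the corollary is almost entirely transported from Theorem \ref{1} together with the classical unobstructedness statements on a curve. The only point that deserves an explicit sentence is why the hypothesis of Theorem \ref{1}---existence of a harmonic metric---is automatic; once that is noted, the rest is a product decomposition and a one-line arithmetic check.
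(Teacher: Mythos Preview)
Your proposal is correct and matches the paper's approach exactly: the paper states that the corollary is straightforward from Theorem~\ref{main}, having already recorded that $Kur_M$ and $Kur_{(E,\theta)}$ are smooth of dimensions $3g-3$ and $2+r^2(2g-2)$ respectively, and your argument simply makes explicit the harmonic metric hypothesis and the arithmetic.
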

The Corollary predicts that the moduli space of a pair of Riemann surfaces and stable Higgs bundles of degree 0 is smooth in a stable locus and its dimension is $g(2r^2+3)-2r^2-1$.
 \par
 We now recall the harmonic metrics.
Let $K$ be a Hermitian metric of $E$ and $\partial_K$ be the (1,0)-part of the Chern connection with respect to $K$ and $\overline\partial_E$. Let $\theta^\dagger_K$ be the adjoint of $\theta$ with respect to $K$. Let $D'_K:=\partial_K+\theta^\dagger_K$ and $D'':=\overline\partial_E+\theta$. We obtain a connection $D:=D'_K+D''$. We call a metric $K$ a harmonic metric when $X$ is a compact K\"ahler manifold and  $D$ is a flat connection (i.e. $D^2=0$). The existence of the harmonic metric for a Higgs bundle is related to the stability of it. Actually, we have the following. 
\begin{theorem}[\cite{N,S1}]
Let $(E,\overline\partial_E,\theta)$ be a Higgs bundle on $X$. $(E,\overline\partial_E,\theta)$ admits a harmonic metric $K$ if and only if it is polystable and $c_1(E)=c_2(E)=0$. Here, $c_i(E)$ is the i-th Chern class. Let $K_1$ and $K_2$ be harmonic metrics. Then there exists a decomposition $(E,\overline\partial_E,\theta)=\oplus_i(E_i,\overline\partial_{E_i},\theta_i)$ such that (i) the decomposition is orthogonal with respect to both $K_1$ and $K_2$, (ii) there exists $a_i>0$ such that  $K_1=a_iK_2$ on $(E_i,\overline\partial_{E_i},\theta_i).$
\end{theorem}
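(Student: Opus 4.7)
The plan is to establish the three assertions separately: (1) existence of a harmonic metric implies polystability with $c_1=c_2=0$; (2) conversely, polystability with vanishing Chern classes produces a harmonic metric; (3) two harmonic metrics differ only by the stated orthogonal decomposition and constant scaling. Throughout I fix the K\"ahler form $\omega$ on $X$ and work with the Hitchin operator $D''=\overline\partial_E+\theta$ together with its $K$-adjoint $D'_K=\partial_K+\theta^\dagger_K$.

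For (1), I would expand $D^2=0$ by Hodge type to obtain the Hitchin--Simpson equations
\[
F_{\partial_K+\overline\partial_E}+[\theta,\theta^\dagger_K]=0,\qquad \overline\partial_E\theta=0,
\]
with $F_\bullet$ the Chern curvature. Taking trace and pairing with $\omega^{n-1}$ annihilates the commutator and gives $\deg(E,\omega)=0$, hence $c_1(E)\cdot[\omega]^{n-1}=0$; varying $\omega$ in the K\"ahler cone produces $c_1(E)=0$. Computing $\operatorname{tr}(F^2)$ from the Hitchin equation and integrating against $\omega^{n-2}$ yields a Bogomolov-type vanishing, which combined with $c_1=0$ forces $c_2(E)=0$. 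Polystability then follows by the Kobayashi--Hitchin argument adapted to Higgs bundles: for any $\theta$-invariant coherent subsheaf $S\subset E$ of generic rank $s$, $K$-orthogonal projection onto $S$ combined with the Hitchin--Simpson equation produces $\deg S\le 0$, with equality iff $S$ is a $K$-orthogonal direct summand respected by both $\overline\partial_E$ and $\theta$.

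Part (2) is the deep existence theorem, and is the step I expect to be by far the main obstacle. I would run the Donaldson--Simpson heat flow
\[
K_t^{-1}\partial_t K_t=-2\sqrt{-1}\,\Lambda_\omega\bigl(F_{\partial_{K_t}+\overline\partial_E}+[\theta,\theta^\dagger_{K_t}]\bigr)
\]
starting from an arbitrary background metric $K_0$. Long-time existence follows from standard parabolic theory, and higher-order convergence along a subsequence reduces to a uniform $C^0$ estimate on $h_t:=K_t K_0^{-1}$. This $C^0$ bound is the crux: one argues by contradiction using Simpson's adaptation of the Donaldson functional, in which failure of the bound produces, via an Uhlenbeck--Yau weak-limit construction, a $\theta$-invariant destabilising coherent subsheaf, contradicting polystability. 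The Chern class hypotheses enter precisely here, through Simpson's integral formula identifying the functional with the normalised degree.

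For (3), given harmonic metrics $K_1$ and $K_2$, write $K_2=K_1 h$ with $h$ positive and $K_1$-self-adjoint. Subtracting the Hitchin--Simpson equations for the two metrics and testing the difference against $\log h$ via integration by parts yields $\overline\partial_E h=0$ and $[\theta,h]=0$, so $h$ is a holomorphic endomorphism commuting with the Higgs field. By compactness of $X$ the coefficients of the characteristic polynomial of $h$ are constant, so the eigenvalues of $h$ are constant positive numbers, and the associated eigenspace decomposition provides the splitting $(E,\overline\partial_E,\theta)=\bigoplus_i(E_i,\overline\partial_{E_i},\theta_i)$. Self-adjointness of $h$ for both metrics makes this decomposition $K_1$- and $K_2$-orthogonal, and on the $i$-th summand $K_1$ and $K_2$ differ by the constant eigenvalue, giving $K_1=a_iK_2$ after the obvious relabelling.
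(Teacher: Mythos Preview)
The paper does not prove this statement: it is quoted verbatim from Hitchin and Simpson and cited as \cite{N,S1} without argument, so there is no ``paper's own proof'' to compare against. Your outline is essentially the standard Simpson strategy and is the right thing to sketch, but since you are effectively reproducing the literature result, let me flag one point where your argument actually breaks and one where it is needlessly indirect.

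In part (1), the step ``varying $\omega$ in the K\"ahler cone produces $c_1(E)=0$'' is not valid: the harmonic metric $K$ is tied to the chosen $\omega$, so you cannot hold $K$ fixed while moving $\omega$, and knowing $c_1(E)\cdot[\omega]^{n-1}=0$ for a single class does not kill $c_1$. The clean argument is much simpler and bypasses this entirely: since $D$ is flat, Chern--Weil applied to $D$ gives $c_i(E)=0$ for every $i$ at once, so both $c_1$ and $c_2$ vanish immediately and no Bogomolov computation is needed. (Alternatively, tracing the Hitchin equation gives $\operatorname{tr}F_{d_K}=0$ as a \emph{form}, not merely after wedging with $\omega^{n-1}$, which already forces $c_1(E)=0$.)

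Parts (2) and (3) are accurate high-level summaries of Simpson's heat-flow existence proof and the standard Bochner argument for uniqueness; those are exactly the ingredients in the cited references, and you have correctly located the $C^0$ estimate as the essential difficulty.
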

Hence, in other words, we study the Kuranishi space $Kur_{(X,E,\theta)}$ when the Higgs bundle is polystable and its Chern classes are 0. \par
We prove Theorem \ref{1} by showing certain DGLAs are quasi-isomorphic. We use the results of \cite{GM2}. Let $(L,[,],d_L)$ be a DGLA. When it has a suitable topological structure and has finite-dimensional cohomology in degrees 0 and 1 we call it \textit{analytic DGLA}. For an analytic DGLA $(L,[,],d_L)$, there exists an analytic space $Kur_L$ such that the germ of the analytic space $(Kur_L,0)$ represents a certain functor which comes from the DGLA. See section \ref{H Kur} for details. \par
 We note that the construction of $Kur_L$ is based on \cite{Ku}. The DGLAs that appear in this paper are analytic DGLAs by the standard Sobolev norms. The $Kur_L$ of such DGLAs are the standard versal deformation space.\par
It was shown in \cite{GM2} that, the germ $(Kur_L,0)$ is homotopy invariant: when two analytic DGLAs $(L_1,[,]_1,d_{L_1})$ and $(L_2,[,]_2,d_{L_2})$ are quasi-isomorphic, then the germs
$(Kur_{L_1},0)$ and $(Kur_{L_2},0)$ are analytic isomorphic. \par
We show that when $X$ is a K\"ahler manifold and $(E,\overline\partial_{E},\theta)$ has a harmonic metric $K$, or equivalently when it is polystable and the Chern classes are 0, then  $(L,[,]_L, d_L)$ is quasi-isomorphic to certain DGLAs. From $X$, we obtain the Kodaira-Spencer algebra $(A^*(TX),[,]_{SN},\overline\partial_{TX})$ which is a DGLA, and from the harmonic bundle $(E,\overline\partial_{E},\theta, K)$ we obtain two DGLAs $(A^*(\mathrm{End}E),[,]_{\mathrm{End}E},D)$ and $(A^*(\mathrm{End}E),[,]_{\mathrm{End}E},D'')$.  The $[,]_{SN}$ is the Schouten-Nijenhuys bracket and  $(L,[,]_L, d_L)$ is the DGLA which governs the deformation of pair. We first prove the following Theorem.
\begin{theorem}[Theorem \ref{q iso}]\label{th 1}
$(L,[,]_L,d_L)$ is quasi-isomorphic to $(A^*(\mathrm{End}E),[,]_{\mathrm{End}E},D)\oplus(A^*(TX),[,]_{SN},\overline\partial_{TX})$.
\end{theorem}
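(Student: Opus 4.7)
My strategy is to construct a zig-zag of DGLA quasi-isomorphisms connecting $(L,[,]_L,d_L)$ to the target DGLA in two stages: first decoupling the pair, then swapping the differential $D''$ for $D$. Recall from \cite{O} that $L$ is built on the graded vector space $A^{0,*}(TX)\oplus A^{*,*}(\mathrm{End}E)$, with differential reducing on each summand to $\overline\partial_{TX}$ and $D''=\overline\partial_E+\theta$, and with a bracket $[,]_L$ that combines $[,]_{SN}$ and $[,]_{\mathrm{End}E}$ together with an interaction term given, essentially, by the action of a tangent vector field on the Higgs field (a Lie-derivative type contraction). Thus the obstruction to writing $L$ as a direct sum is precisely this interaction term.

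\textbf{Stage 1 (decoupling).} I will first show
\begin{equation*}
(L,[,]_L,d_L)\sim M'':=\bigl(A^{*,*}(\mathrm{End}E),[,]_{\mathrm{End}E},D''\bigr)\oplus\bigl(A^{0,*}(TX),[,]_{SN},\overline\partial_{TX}\bigr).
\end{equation*}
The identity on the underlying graded vector space is a chain map but not a Lie morphism, so I instead use the common harmonic subspace. For a harmonic bundle on a compact K\"ahler manifold the Laplacian $\Delta_{D''}$ on $A^{*,*}(\mathrm{End}E)$ has finite-dimensional kernel; combining this with Hodge decomposition for $\overline\partial_{TX}$ on $A^{0,*}(TX)$, one obtains a finite-dimensional harmonic subspace $\mathcal{H}\subset L$. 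The key point is to verify that on $\mathcal{H}$ the coupling term in $[,]_L$ is $D''$-exact, so that it vanishes after projection to cohomology and $\mathcal{H}$ embeds as a DGLA into both $L$ and $M''$ (with induced bracket on cohomology). Since both embeddings are quasi-isomorphisms, we obtain $L\sim M''$ by a zig-zag.

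\textbf{Stage 2 (from $D''$ to $D$).} Next I replace the End factor by $(A^{*,*}(\mathrm{End}E),[,]_{\mathrm{End}E},D)$. This is where the full strength of the harmonic condition on a compact K\"ahler manifold enters, through the principle of the three operators:
\begin{equation*}
\Delta_{D}=2\Delta_{D''}=2\Delta_{D'_K}.
\end{equation*}
Consequently the spaces of $D$-harmonic, $D''$-harmonic, and $D'_K$-harmonic $\mathrm{End}E$-valued forms coincide as subspaces of $A^{*,*}(\mathrm{End}E)$. Using this common harmonic subspace as an intermediate, I produce a zig-zag of DGLA quasi-isomorphisms between $(A^{*,*}(\mathrm{End}E),D'')$ and $(A^{*,*}(\mathrm{End}E),D)$, upgrading $M''$ to the target DGLA.

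\textbf{Main obstacle.} The heart of the argument is Stage 1, and specifically the claim that for a $\overline\partial_{TX}$-harmonic vector-valued form $\alpha$ the interaction term $\alpha\cdot\theta$ (the infinitesimal deformation of $\theta$ induced by $\alpha$) is $D''$-exact. Without the harmonic metric hypothesis this term genuinely couples the two deformation problems and prevents any such decomposition, so this is the step where the harmonic metric must be used in an essential way; I expect to extract the exactness from a K\"ahler-type identity relating the contraction operator $\iota_{\alpha}$ with $\partial_K$ and the adjoint $\theta^\dagger_K$, combined with the standard fact that $\overline\partial_{TX}$-harmonic forms pair trivially with the $\overline\partial$-exact image. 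Once this vanishing is established, Stage 2 follows from the by-now-standard Hodge theory of harmonic bundles and the quasi-isomorphism is assembled by composing the two zig-zags.
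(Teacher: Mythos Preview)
Your plan has a genuine gap in Stage~1 (and the same issue recurs in Stage~2). You propose to use the harmonic subspace $\mathcal{H}$ as the intermediate node in the zig-zag, claiming that once the coupling term in $[,]_L$ is shown to be $D''$-exact on $\mathcal{H}$, the space $\mathcal{H}$ ``embeds as a DGLA into both $L$ and $M''$ (with induced bracket on cohomology)''. But this is not a DGLA morphism: if $[\,,\,]_{\mathcal{H}}$ denotes the induced bracket on cohomology, then the inclusion $\iota:\mathcal{H}\hookrightarrow L$ satisfies $\iota[a,b]_{\mathcal{H}}=H\bigl([\iota a,\iota b]_L\bigr)$ (harmonic projection), whereas bracket compatibility requires $\iota[a,b]_{\mathcal{H}}=[\iota a,\iota b]_L$. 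The bracket of two harmonic forms is almost never harmonic, so these differ. At best your argument shows $H^*(L)\cong H^*(M'')$ as graded Lie algebras, which is strictly weaker than a DGLA quasi-isomorphism and says nothing about higher Massey-type data. (One could try to rescue this via homotopy transfer to an $L_\infty$ structure on $\mathcal{H}$, but that is a substantially different and more technical argument than what you sketch, and you would still have to identify the transferred higher brackets.)

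The paper's proof avoids this by passing not through $\mathcal{H}$ but through $\mathrm{Ker}\,D'_K\oplus A^{0,*}(TX)$. The point is that for $A\in\mathrm{Ker}\,D'_K$ one computes $\{\partial_K,\psi\lrcorner\}A=D'_K(\psi\lrcorner A)$, so the interaction terms in $[,]_L$ land in $\mathrm{im}\,D'_K\subset\mathrm{Ker}\,D'_K$; hence $\mathrm{Ker}\,D'_K\oplus A^{0,*}(TX)$ is a genuine sub-DGLA of $(L,[,]_L,d_L)$, and the inclusion is a quasi-isomorphism. Moreover, the harmonic-metric hypothesis makes the off-diagonal piece of $d_L$ equal to $D'_K(\phi\lrcorner\theta)$, so on this subspace \emph{both} the off-diagonal differential and the interaction bracket are $D'_K$-exact. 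Consequently the projection $(-Q)\oplus\mathrm{Id}$ to $\bigl(\mathbb{H}^*_{D'_K}\oplus A^{0,*}(TX),\,[,]_{\mathrm{End}E}\oplus[,]_{SN},\,0\oplus\overline\partial_{TX}\bigr)$ is an honest DGLA morphism and a quasi-isomorphism; the zig-zag is then completed using the standard formality maps $(\mathrm{Ker}\,D'_K,D'')\to(A^*(\mathrm{End}E),D)$ and $(\mathrm{Ker}\,D'_K,D'')\to\mathbb{H}^*_{D'_K}$. In short, the missing idea is \emph{which} intermediate subspace is closed under $[,]_L$: it is $\mathrm{Ker}\,D'_K$, not the harmonic forms, and this is exactly what converts ``coupling term is exact'' into a zig-zag of actual DGLA morphisms.
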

The DG-vector spaces that come from a harmonic bundle satisfy the formality condition. The following Corollary is straightforward from the formality.
\begin{corollary}[Corollary \ref{q iso3}]\label{co 1}
$(L,[,]_L,d_L)$ is quasi-isomorphic to $(A^*(\mathrm{End}E),[,]_{\mathrm{End}E},D'')\oplus(A^*(TX),[,]_{SN},\overline\partial_{TX})$.
\end{corollary}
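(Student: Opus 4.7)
The plan is to reduce Corollary \ref{co 1} to Theorem \ref{th 1} by exhibiting a zig-zag of DGLA quasi-isomorphisms between $(A^*(\mathrm{End}E), [,]_{\mathrm{End}E}, D)$ and $(A^*(\mathrm{End}E), [,]_{\mathrm{End}E}, D'')$. Once this is established, taking direct sum with the unchanged Kodaira-Spencer DGLA $(A^*(TX),[,]_{SN},\overline\partial_{TX})$ preserves quasi-isomorphism, so composition with the quasi-isomorphism of Theorem \ref{th 1} will yield the claim.

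To construct the zig-zag, I would first invoke the super K\"ahler identities for a harmonic bundle due to Simpson, which give $\Delta_D = 2\Delta_{D''} = 2\Delta_{D'_K}$. In particular, the spaces of $D$-harmonic, $D''$-harmonic, and $D'_K$-harmonic elements of $A^*(\mathrm{End}E)$ all coincide; denote this common subspace by $\mathcal{H}^*$. Following the Deligne-Griffiths-Morgan-Sullivan formality argument adapted to harmonic bundles, I would then construct
\[
(A^*(\mathrm{End}E), D) \hookleftarrow (\ker D'_K, D) \twoheadrightarrow (\mathcal{H}^*, 0) \hookleftarrow (\ker D'_K, D'') \hookrightarrow (A^*(\mathrm{End}E), D'').
\]
The crucial observation is that on $\ker D'_K$ the differentials $D = D'_K + D''$ and $D''$ coincide, so the two middle DGLAs are literally the same object. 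Because $D'_K = \partial_K + \theta^\dagger_K$ is a graded derivation of $[,]_{\mathrm{End}E}$, its kernel is a graded Lie subalgebra, so each vertex in the zig-zag is a genuine DGLA and each arrow is a DGLA morphism.

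The main obstacle I expect is the harmonic-bundle analogue of the $\partial\overline\partial$-lemma, namely that a $D'_K$-closed, $D''$-exact form should be $D'_K D''$-exact. This is a consequence of the super K\"ahler identities together with the orthogonal Hodge decomposition of $A^*(\mathrm{End}E)$ with respect to $\Delta_D$, and it is precisely the formality input referred to in the paragraph before the corollary. Granted this lemma, a standard formal argument as in the compact K\"ahler case shows that each of the three arrows in the zig-zag is a quasi-isomorphism: the inclusions induce isomorphisms on cohomology because every class has a harmonic (hence $D'_K$-closed) representative, and the projection to $\mathcal{H}^*$ kills an ideal of $D'_K D''$-exact elements that matches the image of $D=D''$ on $\ker D'_K$. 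Therefore $(A^*(\mathrm{End}E), D)$ and $(A^*(\mathrm{End}E), D'')$ are quasi-isomorphic as DGLAs, and summing with $(A^*(TX),[,]_{SN},\overline\partial_{TX})$ completes the proof.
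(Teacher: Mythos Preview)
Your proposal is correct and follows essentially the same route as the paper: the corollary is deduced from Theorem \ref{th 1} together with the formality of $(A^*(\mathrm{End}E),[,]_{\mathrm{End}E},D)$ and $(A^*(\mathrm{End}E),[,]_{\mathrm{End}E},D'')$, established via the $D'_KD''$-lemma and the intermediate DGLA $(\ker D'_K,[,]_{\mathrm{End}E},D'')$ exactly as in Lemma \ref{formal}. Your observation that $D$ and $D''$ agree on $\ker D'_K$ makes the detour through $\mathcal{H}^*$ unnecessary, but this redundancy is harmless.
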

 Theorem \ref{1} is obtained from the homotopy invariance of the germ of the Kuranishi spaces, Theorem \ref{th 1} and, Corollary \ref{co 1}.\par
The study of the structure of the Kuranishi space using the theory of DGLA was initiated in \cite{GM1}. They showed the moduli space of representation of the fundamental group for a compact K\"ahler manifold has quadratic singularities when the representation satisfies certain properties. They proved it by showing certain DGLAs are formal. The formality of some DGLAs is applied to some deformation problems \cite{BMM, BM}. In \cite{GM2}, the homotopy invariance of the Kuranishi space was proved. They applied it and studied the Kuranishi space of compact parallelizable nilmanifold.
   \subsubsection*{Acknowledgement}
   The author thanks his supervisor Hisashi Kasuya for his suggestion of this paper. The author also thanks for his patience, advice, and encouragement.
   The author was supported by JST SPRING, Grant Number JPMJSP2138.
      \section{Homotopy invariance of the Kuranishi Space}\label{H Kur}
\subsection{Differential graded Lie algerbas}
In this section, we review the notion of the Differential graded Lie algebra (DGLA for short). This section is based on \cite{M}.
\begin{definition}
A Differential-Graded vector space (DG vector space) is a pair $(L,d_L)$ such that $L=\oplus_iL^i$ is a $\mathbb{Z}$-graded vector space and $d:L\to L$ is a linear map such that $d(L^i)\subset L^{i+1}$ and $d\circ d=0.$
\end{definition}
Let $(L,d_L)$ be a DG vector space. A sub DG-vector space $(L'=\oplus_{i\in\mathbb{Z}}L'^{i},d_{L'})$ of $(L,d_L)$ is DG vector space such that for each $i$, $L'^i\subset L^i$ is a sub vector space and $d_{L'}$ is the restriction of $d_L$. \par
 A morphism $f:(L_1,d_{L_1})\to(L_2,d_{L_2})$ of DG vector spaces is a morphism of vector space $f: L_1\to L_2$ such that it commutes with the differentials. We note that  $f$ induces a morphism $H^i(f):H^i(L_1)\to H^i(L_2)$. Here $H^i(L_j)(j=1,2)$ is the $i$-th cohomology of $(L_j,d_{L_j})$. Let $(L, d_L)$ be a DG vector space and $(L',d_{L'})$ be a sub DG vector space of it. Then the inclusion of $L'^i$ to $L^i$ induces a morphism of DG vector space $i:(L',d_{L'})\to(L, d_L)$.
\begin{definition} A \emph{Differential graded Lie algebra} (DGLA)
    $(L,[,],d)$ is the data of a
    $\mathbb{Z}$-graded vector space
    $L=\oplus_{i\in Z}L^i$ with a bilinear bracket
    $[,]\colon L\times L\to L$ and a linear map $d\colon L\to L$ satisfying
    the following condition:\begin{enumerate}
    
    \item $[,]$ is homogeneous skewsymmetric:
     $[L^i,L^j]\subset L^{i+j}$ and
    $[a,b]+(-1)^{\overline{a}\overline{b}}[b,a]=0$ for every $a,b$ homogeneous.
    
    \item  Every triple of homogeneous elements $a,b,c$  satisfy the Jacobi identity
    \[ [a,[b,c]]=[[a,b],c]+(-1)^{\overline{a}\overline{b}}[b,[a,c]].\]
    
    \item  $d(L^i)\subset L^{i+1}$, $d\circ d=0$ and
    $d[a,b]=[da,b]+(-1)^{\overline{a}}[a,db]$ holds. The map $d$ is called the
    differential of $L$.
    \end{enumerate}
    \end{definition}
    \begin{definition}
    The Maurer-Cartan equation of a DGLA $(L,[,],d)$ is 
    \begin{equation*}
    da+\frac{1}{2}[a,a]=0, a \in L^1.
   \end{equation*}
   The solutions of the Maurer-Cartan equation are called Maurer-Cartan elements of the DGLA $(L,[,],d)$.
    \end{definition}
  Let $(L,[,],d_L)$ be a DGLA. We can consider $(L,d_L)$ as a Differential-Graded vector space (DG vector space for short). Let $(L_1,d_{L_1})$  and $(L_2,d_{L_2})$ be DG vector spaces.
  \begin{definition}
  Let $(L_1,[,]_{1},d_{L_1})$ and $(L_2,[,]_{2},d_{L_2})$ be DGLAs. A morphism $f:(L_1,[,]_{1},d_{L_1})\to(L_2,[,]_{2},d_{L_2})$ of DGLAs is a morphism of DG vector spaces $f:(L_1,d_{L_1})\to(L_2,d_{L_2})$ such that it commutes with brackets.
  \end{definition}
  Let $(L_1,d_{L_1})$  and $(L_2,d_{L_2})$ be DG vector spaces.  We say that  $(L_1,d_{L_1})$  and $(L_2,d_{L_2})$ are \textit{quasi-ismorphic} if there exists a morphism of DG vector space $f:(L_1,d_{L_1})\to(L_2,d_{L_2})$ such that $H^i(f)$ is an isomorphism for each $i$. \par
  Let $(L_1,[,]_1,d_{L_1})$ and $(L_2,[,]_2,d_{L_2})$ be DGLAs. We say that $(L_1,[,]_1,d_{L_1})$ and $(L_2,[,]_2,d_{L_2})$ are \textit{quasi-ismorphic} if there exists a family of 
  DGLA $\{(W_i,[,]_{W_i},d_{W_i})\}^n_{i=1}$ and a family of morphism of DGLA $\{f_i\}^{n+1}_{i=1}$ such that 
  \begin{equation*}
  L_1\xleftarrow{f_1}W_1\xrightarrow{f_2}W_2\xleftarrow{f_3}\cdots\xrightarrow{f_{n-1}}W_{n-1}\xleftarrow{f_{n}}W_n\xrightarrow{f_{n+1}}L_2
      \end{equation*}
      and each $f_i$ is a quasi-isomorphism of DG vector spaces.\par
      Let $(L,[,],d_L)$ be a DGLA. Since $d_L$ satisfies the Leibniz rule, $(H^{*}_L,[,],0)$ has the structure of DGLA.
      \begin{definition}
      Let $(L,[,],d_L)$ be a DGLA. $(L,[,],d_L)$ is called formal if it is quasi-isomorphic to $(H^*_L,[,],0)$.
      \end{definition}
      \subsection{Homotopy invariance of the Kuranishi Space}
     In this section, we review the homotopy invariance of the Kuranishi spaces based on \cite{GM1, GM2}.\par  Let $X$ be an analytic space and $x\in X$. We denote the germ of $X$ at $x$ as $(X,x)$. We denote by $\mathcal{O}_{(X,x)}$ the corresponding analytic local ring consisting of germs of functions on $X$ which are analytic at $x$. Let $A$ be a local ring. We denote the completion of $A$ with respect to its maximal ideal as $\widehat{A}$: the complete local ring of $(X,x)$ is $\widehat{\mathcal{O}}_{(X,x)}$.\par
   Let $\mathbb{K}$ be a field. Let $R$ be a local $\mathbb{K}$-algebra, $Art_{\mathbb{K}}$ be the category of Artin local $\mathbb{K}$-algebras and $Set$ be the category of sets. We have a naturally defined functor
    \begin{equation*}
    \mathrm{Hom}(R,\cdot): Art_{\mathbb{K}}\to Set
    \end{equation*}
    which we denote $F_R$. Let $F: Art_{\mathbb{K}}\to Set$ be a functor. We say that an analytic germs $(X,x)$ \textit{pro-represents} $F$ if $F$ and $F_{\widehat{\mathcal{O}}_{(X,x)}}$ are naturally isomorphic. In this paper, $\mathbb{K}$ is often $\mathbb{C}$. Using the results of \cite{A, Gu}, it was proved in \cite{GM1} such that the following four conditions  are equivalent:
          \begin{itemize}
      \item[(1)] The analytic germs of $(X,x)$ and $(Y,y)$ are analytic isomorphisc.
      \item[(2)] The analytic local rings $\mathcal{O}_{(X,x)}$ and $\mathcal{O}_{(Y,y)}$ are isomorphic.
      \item[(3)] The complete local rings  $\widehat{\mathcal{O}}_{(X,x)}$ and $\widehat{\mathcal{O}}_{(Y,y)}$ are isomorphic.
      \item[(4)] The functor $F_{\widehat{\mathcal{O}}_{(X,x)}}$ and $F_{\widehat{\mathcal{O}}_{(Y,y)}}$ are naturally isomorphic.
      \end{itemize} 
     
  Let $(L,[,],d_L)$ be a DGLA. Let $C^1(L)$ be the complement of the 1-coboundary $B^1(L)\subset L^1$. We define a functor $Y_L:Art_{\mathbb{K}}\to Set$ such that for $A\in
  Art_{\mathbb{K}}$
     \begin{equation*}
       Y_L(A)=\bigg\{\eta\in C^1(L)\otimes m_A: d\eta+\frac{1}{2}[\eta,\eta]=0\bigg\}.
      \end{equation*}
Here, $m_A$ is the maximal ideal of the Artin local $\mathbb{K}$-algebra $A$.  It was proved in $\cite{GM2}$ that $Y_L$ is \textit{pro-representable}: that is, there exists a complete local $\mathbb{K}$-algebra $R_L$ such that $Y_L$ and $F_{R_L} $  are naturally isomorphic.\par
Let $(L_i,[,]_i,d_{L_i})$ $(i=1,2)$ be DGLAs and $C^1(L_i)$ $(i=1,2)$ be the complement of the coboundaries $B^1(L_i)$. Let $f:(L_1,[,]_1,d_{L_1})\to(L_2,[,]_2,d_{L_2})$ be a morphism of DGLA. We assume that 
\begin{itemize}
\item[(i)] $H^1(f)$ is an isomorphism.
\item[(ii)] $H^2(f)$ is an injection.
\end{itemize}
Then it was proved in \cite{GM2} that, if a morphism $f:(L_1,[,]_1,d_{L_1})\to(L_2,[,]_2,d_{L_2})$ satisfies $(i)$ and $(ii)$, then $R_{L_1}$ and $R_{L_2}$ is isomorphic.
\par
We next introduce the notion of \textit{analytic DGLA}. A \textit{normed  DGLA} $(L,[,],d_L)$ is a  DGLA such that each $L^i$ is a normed vector space and with respect to the norms
\begin{itemize}
\item[(1)] $d_L: L^i\to L^{i+1}$ is continuous.
\item[(2)] $[,]:L^1\otimes L^1\to L^2$ is contionius.
\end{itemize}
We let $\widehat{L}^i$ to be the completion of $L^i$ with respect to the norm.\par
An analytic DGLA is a normed DGLA $(L,[,],d_L)$ such that it has finite-dimensional cohomology in degrees 0 and 1 and each $\widehat{L}^i$ has continuous splitting:
\begin{equation*}
0\to Z^j(\widehat{L})\to\widehat{L}^j\to B^{j+1}(\widehat{L})\to 0
\end{equation*}
and 
\begin{equation*}
0\to B^j(\widehat{L})\to Z^j(\widehat{L})\to H^i(\widehat{L})\to 0.
\end{equation*}
 It was proved in \cite[Section 2]{GM2} that for an analytic DGLA $(L,[,],d_L)$, there exists a germ of analytic space $(Kur_L,0)$ such that $F_{\mathcal{O}_{(Kur_L,0)}}$ is naturally isomorphic to $Y_L$. Therefore, $R_L$ is isomorphic to $\widehat{\mathcal{O}}_{(Kur_L,0)}$. See \cite[Chapter 3]{M} for more details for the functors of Artin rings. \par
Based on the above discussions, the following Theorem was proved in \cite{GM2}.
      \begin{theorem}[{\cite[Theorem 4.8.]{GM2}}]\label{Main GM}
      Suppose $(L_1,[,]_{1},d_{L_{1}})$ and $(L_2,[,]_2,d_{L_2})$ are analytic DGLAs which are quasi-isomorphic as DGLAs. Then the analytic germ $(Kur_{L_1},0)$ and $(Kur_{L_2},0)$ are analytically isomorphic.
     \end{theorem}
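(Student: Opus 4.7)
The plan is to reduce the claimed analytic isomorphism of germs to an isomorphism of complete local rings, chain the latter along the zig-zag provided by the quasi-isomorphism hypothesis, and then invoke the equivalence $(1)\Leftrightarrow(3)$ recalled earlier in the section.

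First I unpack the hypothesis. By the definition of quasi-isomorphism of DGLAs given just above, there is a finite zig-zag
\[
L_1 \xleftarrow{f_1} W_1 \xrightarrow{f_2} W_2 \xleftarrow{f_3} \cdots \xrightarrow{f_{n+1}} L_2
\]
in which every $f_j$ is a morphism of DGLAs that is a quasi-isomorphism of the underlying DG vector spaces, i.e.\ induces isomorphisms on cohomology in every degree. In particular each $f_j$ automatically satisfies both conditions $(i)$ ``$H^1(f_j)$ is an isomorphism'' and $(ii)$ ``$H^2(f_j)$ is injective'' required by the isomorphism criterion from \cite{GM2} that was recalled just before the notion of analytic DGLA.

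Next I pass to the pro-representing algebras. The functor $Y_W$ is pro-represented by a complete local $\mathbb{K}$-algebra $R_W$ for \emph{every} DGLA $W$; no analyticity of $W$ is required for this step. Hence $R_{L_1}, R_{W_1},\dots, R_{W_n}, R_{L_2}$ are all well-defined complete local rings. Applying the isomorphism criterion to each arrow $f_j$ (which we may do in either direction since the hypotheses $(i),(ii)$ are symmetric when $f_j$ is a genuine quasi-iso) produces an isomorphism of pro-representing algebras across that arrow; composing the $n+1$ isomorphisms yields $R_{L_1}\cong R_{L_2}$ as complete local $\mathbb{K}$-algebras.

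Finally I translate back into the analytic category. Because $L_1$ and $L_2$ are \emph{analytic} DGLAs, the Kuranishi germs $(Kur_{L_i},0)$ exist and $R_{L_i}\cong \widehat{\mathcal{O}}_{(Kur_{L_i},0)}$ by \cite[Section 2]{GM2}. Combining this with the previous step gives $\widehat{\mathcal{O}}_{(Kur_{L_1},0)}\cong \widehat{\mathcal{O}}_{(Kur_{L_2},0)}$, and the equivalence $(1)\Leftrightarrow(3)$ (stemming from \cite{A, Gu}) promotes this to an analytic isomorphism of germs $(Kur_{L_1},0)\simeq(Kur_{L_2},0)$. The main subtle point, and the reason the argument works at all, is that the intermediate $W_j$ in the zig-zag need \emph{not} be analytic DGLAs: one cannot form Kuranishi germs for them, but the pro-representing algebras $R_{W_j}$ and the isomorphism criterion are purely formal and survive the loss of analytic structure, so the analytic hypothesis is only used at the two endpoints where it is given.
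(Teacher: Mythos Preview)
Your proposal is correct and follows precisely the route the paper sketches: the paper does not write out a self-contained proof of this theorem (it is quoted from \cite{GM2}), but the discussion immediately preceding the statement lays out exactly the ingredients you assemble---the pro-representability of $Y_L$ by $R_L$, the criterion that conditions $(i)$--$(ii)$ on a DGLA morphism force $R_{L_1}\cong R_{L_2}$, the identification $R_L\cong\widehat{\mathcal{O}}_{(Kur_L,0)}$ in the analytic case, and the equivalence $(1)\Leftrightarrow(3)$ from \cite{A,Gu}. Your observation that the intermediate $W_j$ need not be analytic, so that one must work at the level of the $R_{W_j}$ rather than Kuranishi germs along the zig-zag, is the one point not made explicit in the paper's summary and is exactly right.
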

\begin{remark}
The DGLAs that appear in this paper are analytic DGLAs by the standard Sobolev norms.
\end{remark}
\begin{remark}
The construction of $(Kur_L,0)$ is based on \cite{Ku}. When a DGLA $(L,[,],d_L)$ comes from a differential geometric object, the complement $C^1(L)$ is obtained by the Hodge decomposition of the differential $d_L$. In this case, $Kur_L$ is the standard versal deformation space. For example, when $(A^*(TX),[,]_{SN},\overline\partial_{TX})$ is the Kodaira-Spencer algebra of a complex manifold $X$, then $Kur_{A^*(TX)}$ is exactly the Kuranishi space of $X$.
\end{remark}
\section{Deformation of holomorphic-Higgs pairs}\label{DGLA}
In this section, we review our previous work \cite{O}.\par
Let $X$ be a complex manifold and $(E,\overline\partial_E,\theta)$ be a Higgs bundle over $X$.  We called the pair $(X, E,\theta)$ a \textit{holomorphic-Higgs pair}.  In our previous paper \cite{O}, we considered the deformation problem of holomorphic-Higgs pairs and constructed the DGLA that governs the deformation and constructed the Kuranishi space. 
We give the definition of the deformation of the holomorphic-Higgs pair $(X, E,\theta)$.
\begin{definition}
Let $(X,E,\theta)$ be a holomorphic-Higgs pair. A family of deformation of holomorphic-Higgs
pair over a small ball $\Delta$ centered at the origin of $\mathbb{C}^d$, a complex manifold $\mathcal{X}$, a proper holomorphic submersion 
\begin{equation*}
\pi:\mathcal{X}\to \Delta
\end{equation*}
and a Higgs bundle $(\mathcal{E},\Theta)$ such that, $\pi^{-1}(0)=X, \mathcal{E}|_{\pi^{-1}(0)}$, and $\Theta|_{\pi^{-1}(0)}=\theta$.
\end{definition}
\subsection{DGLA}
In this section, we introduce the DGLA which governs the deformation of holomorphic-Higgs pair based on \cite{O}. \par
Let $(X,E, \theta)$ be a holomorphic-Higgs pair and $TX$ be a holomorphic tangent bundle of $X$. We fix a hermitian metric $K$ on $E$. Let $\partial_K$ be a (1,0)-part of the Chern connection with respect to $\overline\partial_E$ and $K$. For $\phi\in A^{(0,i)}(TX)$ and $\partial_K$, we define
\begin{equation*}
\{\partial_K,\phi\lrcorner\}:=\partial_K(\phi\lrcorner)+(-1)^i\phi\lrcorner\partial_K.
\end{equation*}
Here, $\phi\lrcorner$ is the contraction with respect to $\phi$.\par
Let $L^i:=\bigoplus_{p+q=i}A^{p,q}(\mathrm{End}E)\bigoplus A^{0,i}(TX)$ and $L:=\bigoplus_iL^i$. Let $(A,\phi)\in L^i$ and $(B,\psi)\in L^j$. We define,
\begin{equation*}
[(A,\phi),(B,\psi)]_L:=((-1)^i\{\partial_K,\psi\lrcorner\}A-(-1)^{(i+1)j}\{\partial_K,\phi\lrcorner\}B-[A,B]_{\mathrm{End}E},[\phi,\psi]_{SN}).
\end{equation*}
Here, $[,]_{SN}$ is the standard Schouten-Nijenhuys bracket defined on $\bigoplus_iA^{0,i}(TX).$ \par
We define $B\in A^{0,1}(\mathrm{Hom}(TX,\mathrm{End}E))$ and a $\mathbb{C}$-linear map $C_i:A^{0,i}(TX)\to A^{1,i}(\mathrm{End}E)$ such that they acts on $\phi\in A^{0,i}(TX)$ as 
\begin{equation*}
B(\phi):=(-1)^i\phi\lrcorner F_{d_K}, C_i(\phi):=\{\partial_K,\phi\lrcorner\}\theta.
\end{equation*}
We define a $\mathbb{C}$-linear map $d_L:L^i\to L^{i+1}$ as 
\begin{equation*}
d_L:=
\begin{pmatrix}
\overline\partial_{\mathrm{End}E} & B \\
0 & \overline\partial_{TX} \\
\end{pmatrix}
+
\begin{pmatrix}
\theta & C_i \\
0 & 0 \\
\end{pmatrix}.
\end{equation*}
After some calculations, we obtain the following theorem.
\begin{theorem}[{\cite[Theorem 3.1.]{O}}]\label{Main DGLA}
$\big( L,[,]_L, d_L\big)$ is a DGLA. 
\end{theorem}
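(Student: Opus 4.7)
The statement restates [O, Theorem 3.1], so the plan parallels the direct verification done there: one must check the four DGLA axioms, namely graded skew-symmetry and the graded Jacobi identity for $[,]_L$, the squaring $d_L^2=0$, and the graded Leibniz rule. Conceptually, $L$ is a crossed construction of the Higgs DGLA $(A^{*,*}(\mathrm{End}E),[,]_{\mathrm{End}E},D'')$ and the Kodaira--Spencer DGLA $(A^{0,*}(TX),[,]_{SN},\overline\partial_{TX})$, coupled through the action $\phi\mapsto \{\partial_K,\phi\lrcorner\}$ together with the curvature term $B$ and the $\theta$-derived term $C_i$. The plan is to exploit this structure to reduce each axiom to identities internal to the two pieces plus a few mixed identities.

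For $d_L^2=0$, I would split $d_L=d_1+d_2$, with
\[
d_1=\begin{pmatrix}\overline\partial_{\mathrm{End}E}&B\\0&\overline\partial_{TX}\end{pmatrix},\qquad d_2=\begin{pmatrix}\theta&C_i\\0&0\end{pmatrix},
\]
and show $d_1^2=0$, $d_2^2=0$, and $d_1d_2+d_2d_1=0$ separately. The identity $d_1^2=0$ is the differential of the pair DGLA without Higgs field of \cite{CS,H}, whose closedness reduces to the Bianchi identity for the Chern curvature $F_{d_K}$; $d_2^2=0$ reduces to $\theta\wedge\theta=0$ together with a compatibility between $\{\partial_K,\cdot\lrcorner\}\theta$ and $\theta$-multiplication; the cross term uses the holomorphicity $\overline\partial_{\mathrm{End}E}\theta=0$ and an identity matching $\{\partial_K,\phi\lrcorner\}\theta$ with $\overline\partial$ applied to $\phi\lrcorner F_{d_K}$ modulo curvature terms.

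Graded skew-symmetry of the bracket is a direct sign count. The substantive verifications are the graded Jacobi identity and the graded Leibniz rule, both handled componentwise by decomposing inputs along the two summands and reducing to identities internal to the Higgs or Kodaira--Spencer DGLA, augmented by the mixed Cartan-type identity
\[
\{\partial_K,\phi\lrcorner\}\{\partial_K,\psi\lrcorner\}-(-1)^{ij}\{\partial_K,\psi\lrcorner\}\{\partial_K,\phi\lrcorner\}=\{\partial_K,[\phi,\psi]_{SN}\lrcorner\}
\]
for $\phi\in A^{0,i}(TX)$, $\psi\in A^{0,j}(TX)$, viewed as operators on $A^{*,*}(\mathrm{End}E)$. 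This is the compatibility of the $TX$-action with the Schouten--Nijenhuys bracket that makes the semidirect-product-style formula consistent.

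The main obstacle is the careful sign bookkeeping, in particular verifying that the specific exponents $(-1)^i$ and $(-1)^{(i+1)j}$ in the bracket, together with $\{\partial_K,\phi\lrcorner\}$ behaving as a degree-$i$ derivation on $A^{*,*}(\mathrm{End}E)$, produce exactly the signs needed for Jacobi and Leibniz. The external inputs — holomorphicity of $\theta$, the Bianchi identity for $F_{d_K}$, and the displayed Cartan-type identity — are the required technical facts; once they are in hand the verification is a routine, if lengthy, bookkeeping, which is the way it is carried out in \cite{O}.
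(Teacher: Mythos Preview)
The paper does not give its own proof of this statement: it is quoted verbatim as \cite[Theorem~3.1]{O} with the sentence ``After some calculations, we obtain the following theorem,'' and no argument is reproduced. Your proposal correctly recognizes this and outlines the direct axiom-by-axiom verification that \cite{O} carries out; the splitting $d_L=d_1+d_2$, the reduction of $d_1^2=0$ to the Chan--Suen/Huang pair DGLA and the Bianchi identity, the use of $\theta\wedge\theta=0$ and $\overline\partial_{\mathrm{End}E}\theta=0$ for the remaining pieces, and the Cartan-type commutator identity for the operators $\{\partial_K,\phi\lrcorner\}$ (which holds because $\partial_K^2=0$ for the Chern connection) are exactly the ingredients one needs, and the rest is indeed sign bookkeeping.
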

This DGLA governs the deformation of $(X, E,\theta)$. Actually, let $(A,\phi)\in L^1$. Then $(A,\phi)$ defines a holomorphic-Higgs pair if and only if $(A,\phi)$ is a Maurer-Cartan element. This was proved in \cite[Theorem 3.6.]{O}.
\subsection{Kuranishi Space}
We use the same notation as the previous section. In this section, we introduce the Kuranishi Space and the Kuranishi family for a given holomorphic-Higgs pair $(X, E,\theta)$.  Briefly, Kuranishi Space is an analytic space and the Kuranishi family is a family of holomorphic-Higgs pairs parametrized by Kuranishi Space such that every holomorphic-Higgs pair which comes from a small deformation of $(X,E,\theta)$ is isomorphic to a holomorphic-Higgs pair which is in the Kuranishi family.  \par
Since $(L,[,]_L,d_L)$ is constructed differential geometrically, we can apply the Kuranishi's work \cite{Ku} to construct the Kuranishi family. Let $d_L^*$ be the formal adjoint of $d_L$ with respect to $L^2$-inner product, $\Delta_L:=d_Ld_L^*+d_L^*d_L$ be the Laplacian and $G_L$ be the Green operator associated to $\Delta_L$. Let $\mathbb{H}^i:=\mathrm{ker} (\Delta_L:L^i\to L^i)$ and $H:L^i\to\mathbb{H}^i$ be the projection. By the classical Hodge theory, we know that $\mathrm{dim}\mathbb{H}^i$ has a finite dimension for each $i$. Let $\{\eta_i\}_{i=1}^n$ be an orthogonal bases of $\mathbb{H}^1$ with respect to $L^2$-inner product. For each $t=(t_1,\dots,t_n)\in\mathbb{C}^n$, we set $\epsilon_1(t):=\sum_it_i\eta_i$.
\begin{lemma}[{\cite[Proposition 4.1, Proposition 4.2.]{O}}]
For any $t\in\mathbb{C}^n$ and $|t|<<1$, there is a $\epsilon(t)\in L^1$ such that $\epsilon(t)$ satisfies the equation
\begin{equation*}
\epsilon(t)=\epsilon_1(t)+\frac{1}{2}d^*_{L}G_L[\epsilon(t),\epsilon(t)]_L
\end{equation*}
and $\epsilon(t)$ depends holomorphically with respect to variable $t$.\par
Moreover, $\epsilon(t)$ satisfies the Maurer-Cartan equation if and only if 
\begin{equation*}
H[\epsilon(t),\epsilon(t)]=0.
\end{equation*}
\end{lemma}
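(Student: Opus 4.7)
The plan is a standard Kuranishi-type Picard argument carried out in a Sobolev completion of $L^1$. Fix $s$ large enough that Sobolev multiplication makes the bracket $[\cdot,\cdot]_L\colon W^{s,2}(L^1)\times W^{s,2}(L^1)\to W^{s,2}(L^2)$ a continuous bilinear map. Since $G_L$ is a pseudodifferential operator of order $-2$ and $d_L^*$ has order $+1$, the composition $d_L^*G_L\colon W^{s,2}(L^2)\to W^{s,2}(L^1)$ is bounded (it gains one derivative). Hence the map
\begin{equation*}
T_t(\epsilon):=\epsilon_1(t)+\tfrac{1}{2}d_L^*G_L[\epsilon,\epsilon]_L
\end{equation*}
is a smooth self-map of $W^{s,2}(L^1)$ which, for $|t|$ sufficiently small, contracts a small ball around $\epsilon_1(t)$. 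Picard iteration $\epsilon^{(k+1)}(t):=T_t(\epsilon^{(k)}(t))$ with $\epsilon^{(0)}(t):=\epsilon_1(t)$ then converges to the unique fixed point $\epsilon(t)$. Each iterate is polynomial in $t$ with Sobolev-valued coefficients, so uniform convergence on small polydiscs delivers holomorphy in $t$; elliptic regularity for $\Delta_L$ upgrades $\epsilon(t)$ to a smooth section.

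For the Maurer-Cartan equivalence, set $\eta:=d_L\epsilon+\tfrac{1}{2}[\epsilon,\epsilon]_L$. The forward direction is immediate: $\eta=0$ implies $0=H\eta=Hd_L\epsilon+\tfrac{1}{2}H[\epsilon,\epsilon]_L=\tfrac{1}{2}H[\epsilon,\epsilon]_L$ since $Hd_L=0$.

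For the converse, assume $H[\epsilon,\epsilon]_L=0$. Apply $d_L$ to the Kuranishi equation (using $d_L\epsilon_1=0$) and expand via the Hodge decomposition $I=H+d_Ld_L^*G_L+d_L^*d_LG_L$ to rewrite $\eta$ as a sum of a term proportional to $H[\epsilon,\epsilon]_L$ and a term in the image of $d_L^*$ built from $[\epsilon,\epsilon]_L$; under the hypothesis only the latter survives. Next, graded Leibniz gives $d_L[\epsilon,\epsilon]_L=2[d_L\epsilon,\epsilon]_L$, and substituting $d_L\epsilon=\eta-\tfrac{1}{2}[\epsilon,\epsilon]_L$ together with the graded Jacobi identity $[[\epsilon,\epsilon]_L,\epsilon]_L=0$ kills the cubic term and produces a closed-form linear identity of the shape $\eta=c\,d_L^*G_L[\eta,\epsilon]_L$ for a universal constant $c$. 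Taking Sobolev norms then gives $\|\eta\|\le C\|\epsilon(t)\|\,\|\eta\|$, and for $|t|$ small enough that $C\|\epsilon(t)\|<1$ this forces $\eta=0$, i.e.\ $\epsilon(t)$ satisfies the Maurer-Cartan equation.

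The main obstacle is the algebraic bookkeeping in the converse direction: one must track signs carefully so that, after applying Hodge decomposition and graded Jacobi, the cubic $[[\epsilon,\epsilon]_L,\epsilon]_L$ drops out and $\eta$ reappears linearly on the right-hand side. Once this linear identity is obtained, the smallness estimate is routine. The analytic ingredients of the first part (Banach contraction, Sobolev multiplication, elliptic regularity for $\Delta_L$, holomorphic parameter dependence) are standard for analytic DGLAs arising from differential-geometric data, as remarked after Theorem \ref{Main GM}.
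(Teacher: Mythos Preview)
The paper does not actually prove this lemma; it is quoted from the author's earlier work \cite{O} and stated without argument. So there is no ``paper's own proof'' to compare against here. Your outline is the standard Kuranishi construction (Banach fixed point in a Sobolev completion, holomorphic dependence via uniform limits of polynomial iterates, elliptic bootstrapping), and this is exactly the method one expects \cite{O} to use, following \cite{Ku}. The Maurer--Cartan equivalence argument you sketch---Hodge-decompose, substitute, kill the cubic via graded Jacobi, then close up with a smallness estimate---is also the classical one.

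One point worth flagging: your converse argument tacitly assumes the sign convention $\epsilon=\epsilon_1-\tfrac{1}{2}d_L^*G_L[\epsilon,\epsilon]_L$, which is the standard Kuranishi normalisation. With the $+$ sign printed in the lemma, applying $d_L$ and the Hodge identity $d_Ld_L^*G_L=I-H-d_L^*G_Ld_L$ gives
\[
\eta=d_L\epsilon+\tfrac{1}{2}[\epsilon,\epsilon]_L=[\epsilon,\epsilon]_L-\tfrac{1}{2}H[\epsilon,\epsilon]_L-\tfrac{1}{2}d_L^*G_Ld_L[\epsilon,\epsilon]_L,
\]
and the stray $[\epsilon,\epsilon]_L$ term obstructs the linear identity $\eta=c\,d_L^*G_L[\eta,\epsilon]_L$ you need. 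With the $-$ sign one gets instead $\eta=\tfrac{1}{2}H[\epsilon,\epsilon]_L+\tfrac{1}{2}d_L^*G_Ld_L[\epsilon,\epsilon]_L$, and your argument goes through verbatim with $c=1$. This is almost certainly a typographical sign slip in the statement rather than a flaw in your reasoning; just be explicit about which convention you are using when you write it up.
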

Let $\Delta\subset\mathbb{C}^n$ be a small ball such that $\epsilon(t)$ is holomorphic on $\Delta$. We set,
\begin{equation*}
Kur_{(X,E,\theta)}:=\{t\in\Delta : H[\epsilon(t),\epsilon(t)]=0\}.
\end{equation*}
Since the dimension of  $\mathbb{H}^2$ is finite, $Kur_{(X, E,\theta)}$ is an analytic space. We call $Kur_{(X, E,\theta)}$ the Kuranishi space of $(X,E,\theta)$. Since a Maurer-Cartan element defines a holomorphic-Higgs pair, we obtain a family of holomorphic-Higgs pair $\{(X_{\epsilon(t)},E_{\epsilon(t)},\theta_{\epsilon(t)})\}_{t\in Kur_{(X, E,\theta)}}$. We call this family the Kuranishi family of $(X,E,\theta)$. The Kuranishi space and the Kuranishi family contain all small deformation of $(X,E,\theta)$. Actually, let $|\cdot|_k$ be the $k$-th Sobolev norm of $L^1$ and let $\eta\in L^1$ be a Maurer-Cartan element. If $|\eta|_k<<1$, then there exists a $t\in Kur_{(X, E,\theta)}$ such that 
\begin{equation*}
(X_\eta, E_\eta, \theta_\eta)\simeq (X_{\epsilon(t)},E_{\epsilon(t)},\theta_{\epsilon(t)}).
\end{equation*}
Here $(X_\eta, E_\eta, \theta_\eta)$ is the holomorphic-Higgs pair which $\eta$ determines. This was proved in \cite[Theorem 4.2.]{O}.

\section{Harmonic bunldes}
\subsection{Harmonic bundles}\label{hb}
In this section, we assume $(X,\omega)$ to be a compact K\"ahler manifold. \par
Let $(E,\overline\partial_E,\theta)$ be a Higgs bundle over $X$ and  $K$ be a hermitian metric of $E$. Let $\partial_K$ be the (1,0)-part of the Chern connection associated with $\overline\partial_E$ and $K$ and $\theta_K^{\dagger}$ be the adjoint of $\theta$ with respect to $K$. We set $D'_K:=\partial_K+\theta_K^{\dagger}$ and $D'':=\overline\partial_E+\theta$. We obtain a connection $D:=D_K+D'$. We call a metric $K$ a harmonic metric when $D$ is a flat bundle (i.e. $D^2=0$). We call $(E,\overline\partial_E,\theta, K)$ a harmonic bundle when $K$ is a harmonic metric.\par
The existence of the harmonic metric on a Higgs bundle is related to the stability of it. The Riemann surface case was proved in \cite{N} and for the general K\"ahler case was proved in \cite{S1}.
\begin{theorem}[\cite{N,S1}]
Let $(E,\overline\partial_E,\theta)$ be a Higgs bundle on $X$. $(E,\overline\partial_E,\theta)$ admits a harmonic metric $K$ if and only if it is polystable and $c_1(E)=c_2(E)=0$. Let $K_1$ and $K_2$ be harmonic metrics. Then there exists a decomposition $(E,\overline\partial_E,\theta)=\oplus_i(E_i,\overline\partial_{E_i},\theta_i)$ such that (i) the decomposition is orthogonal with respect to both $K_1$ and $K_2$, (ii) there exists $a_i>0$ such that  $K_1=a_iK_2$ on $(E_i,\overline\partial_{E_i},\theta_i).$
\end{theorem}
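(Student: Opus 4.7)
The plan is to separate the theorem into two parts: the existence and characterization of harmonic metrics, and the comparison statement for two such metrics. For the ``only if'' direction of the characterization, given a harmonic metric $K$ I would expand the flatness condition $D^2=0$ with $D=\partial_K+\overline\partial_E+\theta+\theta^\dagger_K$ and use $\theta\wedge\theta=0$. Matching bidegrees yields both $\overline\partial_E\theta=0$ (already in the hypotheses) and the Hitchin equation $F_{\partial_K+\overline\partial_E}+[\theta,\theta^\dagger_K]=0$. Chern--Weil theory applied to this curvature, combined with trace identities, forces the representatives of $c_1(E)$ and $2c_2(E)-c_1(E)^2$ (and hence $c_2(E)$) to vanish. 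Polystability then follows by a Kobayashi-type argument: for any $\theta$-invariant coherent subsheaf $F\subset E$, a Gauss--Codazzi decomposition of the restricted connection combined with the Hitchin equation gives $\mu(F)\le\mu(E)=0$, with equality forcing $F$ to be a $D$-parallel direct summand, which yields polystability after iterating.

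The ``if'' direction is the analytic heart of the result and the main obstacle. The strategy is to fix the underlying smooth complex data and vary only the Hermitian metric, seeking a solution to the Hitchin equation above. Following Simpson, I would set up the nonlinear heat flow for $h_t:=K_0^{-1}K_t$ with a background metric $K_0$; short-time existence and parabolic smoothing are standard. The decisive step is obtaining a global $C^0$ bound on $h_t$, since higher regularity then bootstraps. If the $C^0$ bound fails, one invokes Uhlenbeck--Yau's weak subsheaf construction: large-eigenvalue spectral projectors of $h_t$ admit an $L^2_1$ limit that is a weakly holomorphic projection; $\theta$-compatibility passes to the limit and yields a $\theta$-invariant subsheaf whose slope contradicts polystability combined with $c_1(E)=c_2(E)=0$. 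Hence $h_t$ converges in $C^0$ and, by bootstrap, smoothly to a solution of the Hitchin equation, which together with $\overline\partial_E\theta=0$ and $\theta\wedge\theta=0$ is equivalent to $D^2=0$, i.e. harmonicity of the limit metric.

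For the rigidity statement, given two harmonic metrics $K_1,K_2$, define the positive $K_1$-self-adjoint endomorphism $s:=K_1^{-1}K_2\in\mathrm{End}E$. A direct computation of $\partial_{K_2}$ and $\theta^\dagger_{K_2}$ in terms of $\partial_{K_1}$, $\theta^\dagger_{K_1}$ and $s$, combined with the Hitchin equation for both metrics, yields the coupled vanishings $\overline\partial_E s=0$, $[\theta,s]=0$, $\partial_{K_1}s=0$ and $[\theta^\dagger_{K_1},s]=0$, so that $s$ is parallel for the flat connection built from $K_1$ and commutes with the full Higgs data. Its eigenvalues $a_i>0$ are therefore locally constant, hence globally constant by connectedness of $X$, and the spectral projectors $\pi_i$ are themselves $D$-parallel, $\overline\partial_E$-holomorphic and $\theta$-commuting. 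The images $E_i:=\pi_i(E)$ give a simultaneous orthogonal decomposition of the Higgs bundle for both $K_1$ and $K_2$, with $K_2=a_iK_1$ on $E_i$ by construction. The only subtlety is that $\pi_i$ is a priori only a smooth section of $\mathrm{End}E$, upgraded to a holomorphic $\theta$-compatible projector precisely by the vanishing identities just derived.
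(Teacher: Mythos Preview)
The paper does not give its own proof of this statement: it is cited as a foundational result of Hitchin and Simpson (references \cite{N,S1} in the paper), stated twice without argument and used only as background for the harmonic-bundle formalism. So there is no ``paper's proof'' to compare against; your proposal is an outline of the original literature proof rather than of anything the paper does.

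That said, your sketch tracks the standard Hitchin--Simpson approach closely and is broadly sound in its architecture. One point deserves tightening: in the rigidity part, the vanishings $\overline\partial_E s=0$, $[\theta,s]=0$, $\partial_{K_1}s=0$, $[\theta^\dagger_{K_1},s]=0$ do \emph{not} drop out of a purely algebraic comparison of the two Hitchin equations. Subtracting the two curvature identities only gives a second-order relation of the form $\overline\partial_E(s^{-1}\partial_{K_1}s)+[\theta,\,s^{-1}\theta^\dagger_{K_1}s-\theta^\dagger_{K_1}]=0$; to pass from this to the individual first-order vanishings one needs an integration-by-parts or Bochner/maximum-principle step (e.g.\ analyzing $\Delta\operatorname{tr}(s)$ or $\Delta\log\operatorname{tr}(s)$ on the compact K\"ahler manifold and using positivity of the resulting terms). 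Without that analytic step the argument has a genuine gap. Once those vanishings are established, your spectral-decomposition conclusion is correct.
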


 \subsection{K\"ahler Identities}
 We use the same notation as the previous section.\par
Let $A^p(E)$ be the space of the $p$ forms which takes value at $E$. We define a $L^2$-metric on $A^p(E)$ by using the Riemannian metric $g$ on $X$ and the Hermitian metric $K$ on $E$. Let $D^*$, $(D'_K)^*$ and $(D'')^*$ be the formal adjoint of $D$, $D'_K$ and $D''$ with respect to the $L^2$ inner product. Let $\Lambda_\omega$ be the contraction with respect to the Kahler form $\omega$. The following K\"ahler identities were proved in \cite[Lemma 3.1.]{S1}. 
\begin{lemma}
Let $(X,\omega)$ be a compact K\"ahler manifold, $(E,\overline\partial_E,\theta)$ be a Higgs bundle over $X$ and $K$ be a hermitian metric. Let $D'_K$,  $D''$, $(D'_K)^*$ and $(D'')^*$ be as above. Then the following equalities hold.
\begin{equation*}
(D'_K)^*=\sqrt{-1}[\ \Lambda_\omega, D'' ]\ , (D'')^*=-\sqrt{-1}[\ \Lambda_\omega, D'_K ]\ .
\end{equation*}
\end{lemma}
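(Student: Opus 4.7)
The plan is to verify the two identities by splitting each operator according to $D'_K = \partial_K + \theta_K^\dagger$ and $D'' = \overline\partial_E + \theta$, and matching the first-order (Chern-connection) and zeroth-order (Higgs-field) contributions separately. Expanding
\begin{equation*}
(D'_K)^* = \partial_K^* + (\theta_K^\dagger)^*, \qquad \sqrt{-1}\,[\Lambda_\omega, D''] = \sqrt{-1}\,[\Lambda_\omega, \overline\partial_E] + \sqrt{-1}\,[\Lambda_\omega, \theta],
\end{equation*}
the first identity of the lemma breaks into two pieces: the Chern-connection identity $\partial_K^* = \sqrt{-1}\,[\Lambda_\omega, \overline\partial_E]$ and the Higgs-field identity $(\theta_K^\dagger)^* = \sqrt{-1}\,[\Lambda_\omega, \theta]$. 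A symmetric splitting applies to the second identity $(D'')^* = -\sqrt{-1}\,[\Lambda_\omega, D'_K]$.

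The Chern-connection pieces $\partial_K^* = \sqrt{-1}\,[\Lambda_\omega, \overline\partial_E]$ and $\overline\partial_E^* = -\sqrt{-1}\,[\Lambda_\omega, \partial_K]$ are the classical K\"ahler identities for the Hermitian holomorphic bundle $(E, \overline\partial_E, K)$ over the K\"ahler manifold $(X, \omega)$, which can be invoked from standard references and require no new input.

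The Higgs-field pieces $(\theta_K^\dagger)^* = \sqrt{-1}\,[\Lambda_\omega, \theta]$ and $\theta^* = -\sqrt{-1}\,[\Lambda_\omega, \theta_K^\dagger]$ are zeroth-order and hold pointwise. I would verify them by fixing a point $p \in X$ and choosing a unitary frame of $E$ near $p$ together with K\"ahler normal coordinates, so that $\omega = \frac{\sqrt{-1}}{2}\sum_j dz^j \wedge d\overline z^j$ at $p$ and the Hermitian form on $E$ is standard to first order at $p$. Writing $\theta = \sum_j \theta_j\, dz^j$ with $\theta_j \in \mathrm{End}(E)_p$, one has $\theta_K^\dagger = \sum_j \theta_j^*\, d\overline z^j$, where $\theta_j^*$ is the fiberwise Hermitian adjoint. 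A direct expansion then shows that both sides of the identity act on a test element $\alpha\otimes s \in A^{p,q}(\mathrm{End}\,E)_p$ as a common combination of contractions $\iota_{\partial/\partial z^j}$ on the form factor composed with $\theta_j^*$ on the endomorphism factor, up to the overall factor $\sqrt{-1}$. The companion identity $\theta^* = -\sqrt{-1}\,[\Lambda_\omega, \theta_K^\dagger]$ is verified analogously (or deduced by exchanging the roles of $\theta$ and $\theta_K^\dagger$ and conjugating).

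The main obstacle is bookkeeping: tracking the signs produced by the graded commutator $[\Lambda_\omega, \cdot]$ on forms of different bidegree, the factors of $\sqrt{-1}$ coming from the K\"ahler form, and the order reversal that occurs when taking the Hermitian adjoint of a composition of endomorphisms. Once a consistent sign convention is fixed, the pointwise verification is routine bilinear algebra, and the lemma follows by summing the Chern-connection and Higgs-field contributions on each side.
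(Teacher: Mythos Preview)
The paper does not give its own proof of this lemma; it simply records the statement and attributes it to Simpson \cite[Lemma 3.1.]{S1}. Your outline---splitting $D'_K$ and $D''$ into their Chern-connection and Higgs-field components, invoking the classical bundle-valued K\"ahler identities for the first-order pieces, and verifying the zeroth-order Higgs pieces pointwise in a unitary frame with normal coordinates---is correct and is exactly the standard argument (and essentially how Simpson proves it). There is nothing to compare: you are supplying a proof where the paper only gives a citation.
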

We define the laplacians as follows,
\begin{align*}
\Delta:=&DD^*+D^*D,\\
\Delta'':=&D''(D'')^*+(D'')^*D'',\\
\Delta'_K:=&D'_K(D'_K)^*+(D'_K)^*D'_K.
\end{align*}
We assume $K$ to be a harmonic metric. Under this assumption, we have $D''D'_K+D'_KD''=0$ and by the K\"ahler identities, we obtain the following equalities.
\begin{equation*}
\Delta= 2\Delta''= 2\Delta'_K.
\end{equation*}
Let $G, G''$ and  $G'_K$ be the Green operators asscoiated to $\Delta,\Delta''$ and $\Delta'_K$. By the above relations of laplacians, we have $ 2G=G'=G'_K.$
We set $\mathbb{H}^i:=\text{ker}\Delta$. By the classical Hodge theory, we have the following orthogonal decompositions with respect to the $L^2$-inner product.
\begin{align*}
    A^{i}(E)&=\mathbb{H}^i\oplus \text{im}D\oplus \text{im}D^*,\\
    A^{i}(E)&=\mathbb{H}^i\oplus \text{im}D''\oplus \text{im}(D'')^*,\\
    A^{i}(E)&=\mathbb{H}^i\oplus \text{im}D'_K\oplus \text{im}(D'_K)^*.
\end{align*}
The next lemma was proved in \cite{S2}. We call it $D'_KD''$-lemma in this paper.
\begin{lemma}[{\cite[Lemma 2.1.]{S2}}] 
    Let $(E, \overline\partial_E,\theta, K)$ be a harmonic bundle on $X$. Then
    \begin{equation*}
        \mathrm{ker}D'_K\cap\mathrm{ker}D''\cap(\mathrm{im}D'_K+\mathrm{im}D'')=\mathrm{im}D'_KD''.
    \end{equation*}
\end{lemma}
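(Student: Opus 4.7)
The plan is to prove the two inclusions separately; the $\supseteq$ direction is nearly immediate, and the $\subseteq$ direction is a Hodge-theoretic argument modeled on the classical proof of the $\partial\bar\partial$-lemma, using the K\"ahler identities together with the coincidence $\Delta'' = \Delta'_K$ that holds precisely because $K$ is harmonic.

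For $\supseteq$, I would just verify that any element of the form $D'_K D'' \gamma$ lies in each of the three sets. That it lies in $\mathrm{im}\,D'_K$ is tautological. That $D'_K(D'_K D'' \gamma) = 0$ follows from $(D'_K)^2 = 0$, which holds because $D^2 = 0$ together with the bidegree decomposition under the harmonic hypothesis. That $D''(D'_K D'' \gamma) = 0$ follows from the anticommutation $D'_K D'' + D'' D'_K = 0$ (again a consequence of $D^2 = 0$) combined with $(D'')^2 = 0$.

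For $\subseteq$, let $\eta \in \mathrm{ker}\,D'_K \cap \mathrm{ker}\,D'' \cap (\mathrm{im}\,D'_K + \mathrm{im}\,D'')$. The element $\eta$ is orthogonal to $\mathbb{H}^*$, because both $\mathrm{im}\,D'_K$ and $\mathrm{im}\,D''$ are (for $h \in \mathbb{H}$ one has $(D'_K)^* h = (D'')^* h = 0$ since $\mathbb{H} = \mathrm{ker}\,\Delta''= \mathrm{ker}\,\Delta'_K$). Writing the $D''$-Hodge decomposition
\begin{equation*}
\eta = H(\eta) + D'' G'' (D'')^* \eta + (D'')^* G'' D'' \eta
\end{equation*}
and using $H(\eta) = 0$ and $D'' \eta = 0$, I get $\eta = D'' u$ with $u = G''(D'')^* \eta$. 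Now I apply the K\"ahler identity $(D'')^* = -\sqrt{-1}[\Lambda_\omega, D'_K]$ and the hypothesis $D'_K \eta = 0$ to obtain $(D'')^*\eta = \sqrt{-1}\, D'_K \Lambda_\omega \eta$. Since $\Delta'_K = \Delta''$, the operator $D'_K$ commutes with $G''$, so
\begin{equation*}
u = \sqrt{-1}\, G'' D'_K \Lambda_\omega \eta = \sqrt{-1}\, D'_K G'' \Lambda_\omega \eta.
\end{equation*}
Substituting back and using $D'' D'_K = - D'_K D''$ gives $\eta = -\sqrt{-1}\, D'_K D''(G'' \Lambda_\omega \eta)$, which exhibits $\eta$ as an element of $\mathrm{im}\,D'_K D''$.

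The only real obstacle is keeping track of which objects commute with which. The crucial point — and the place where the harmonic hypothesis enters nontrivially — is that $D'_K$ commutes with $G''$, which rests on the Bochner-type identity $\Delta'' = \Delta'_K$ recalled in the previous subsection. Once this and the K\"ahler identity are in hand, the rest is a one-line manipulation of Hodge projectors; no additional analytic input (e.g.\ regularity of $G''$ on smooth forms) is needed beyond what is already supplied by the classical Hodge theory on the compact K\"ahler manifold $X$.
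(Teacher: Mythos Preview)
Your proof is correct, but it takes a different route from the paper's. The paper (following Simpson) writes $\gamma = D'_K\alpha + D''\beta$, applies the $D'_K$-Hodge decomposition to the \emph{potential} $\beta$ (and symmetrically to $\alpha$), uses the K\"ahler identity in the form $D''(D'_K)^* = -(D'_K)^*D''$, and then kills the unwanted term $(D'_K)^*D''\beta_2$ by an $L^2$-norm computation: since $\gamma$ is $D'_K$-closed this term is also $D'_K$-closed, and then $\|(D'_K)^*D''\beta_2\|^2 = (D''\beta_2, D'_K(D'_K)^*D''\beta_2) = 0$. You instead apply the $D''$-Hodge decomposition directly to $\eta$, use orthogonality to harmonics to drop $H(\eta)$, and then convert $(D'')^*\eta$ into $\sqrt{-1}D'_K\Lambda_\omega\eta$ via the K\"ahler identity and the hypothesis $D'_K\eta=0$; commuting $D'_K$ through $G''$ (which rests on $\Delta''=\Delta'_K$) then yields the explicit primitive $-\sqrt{-1}\,G''\Lambda_\omega\eta$.

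Both arguments hinge on the same two ingredients---the K\"ahler identities and the coincidence $\Delta''=\Delta'_K$---but they deploy them differently. Your version is closer to the Deligne-style proof of the classical $\partial\bar\partial$-lemma and has the pleasant feature of producing an explicit formula for the potential; the paper's version avoids invoking the commutation of $D'_K$ with the Green operator and instead uses a bare-hands inner-product argument, which some readers may find more transparent.
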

\begin{proof} We give the proof for convenience. This lemma was originally proved in \cite{S2}.\par
Let $\gamma\in A^i(E).$ Suppose $\gamma=D'_K\alpha+D''\beta$, $D'_K\gamma=0$ and $D''\gamma=0$. Let $\beta=\beta_0+D'_K\beta_1+(D'_K)^*\beta_2$ be the Hodge decomposition with respect to $D'_K$ with $\beta_0$ harmonic. Since $\Delta'_K=\Delta''$, $\Delta''\beta_0=0.$ Thus we have
\begin{equation*}
 D''\beta=D''D'_K\beta_1+D''(D'_K)^*\beta_2.
\end{equation*}
From the K\"ahler identities, we have $D''(D'_K)^*=\sqrt{-1}D''[\ \Lambda_\omega,D'']\ =\sqrt{-1}D''\Lambda_\omega D''=-\sqrt{-1}'[\ \Lambda_\omega,D'']\ D''=-(D'_K)^*D''.$ Hence we have 
\begin{equation*}
\gamma=D'_K\alpha+D''D'_K\beta_1-(D'_K)^*D''\beta_2.
\end{equation*}
Since $\gamma$ is $D'_K-$closed, $(D'_K)^*D''\beta_2$ is also. From the equation
\begin{equation*}
\big((D'_K)^*D''\beta_2,(D'_K)^*D''\beta_2\big)_{L^2}=\big(D''\beta_2,D'_K(D'_K)^*D''\beta_2\big)_{L^2}=0,
\end{equation*}
we obtain $(D'_K)^*D''\beta_2=0$ and therefore, $D''\beta=D''D'_K\beta_1.$ Here $( , )_{L^2}$ is the $L^2$-norm. We can show $D'_K\alpha=D''D'_K\alpha_1$ by using exactly the same argument as $\beta.$ Hence the claim is proved.
\end{proof}
\subsection{Formality}
We use the same notation as in the section \ref{hb}. \par
Let $(X,\omega)$ be a compact K\"ahler manifold, $(E,\overline\partial_E,\theta,K)$ be a harmonic bundle on $X$. We obtain three DG vector spaces $(A^*(E):=\oplus_iA^i(E),D)$, $(A^*(E),D'')$, and $(A^*(E),D'_K)$. We define $\mathbb{H}^i_{DR}$, $\mathbb{H}^i_{Dol}$, and $\mathbb{H}^i_{D'_K}$ to be the $i$-th cohomology of $(A^*(E),D)$, $(A^*(E),D'')$, and $(A^*(E),D'_K)$. These DG vector spaces satisfy formality conditions. 
\begin{lemma}[{\cite[P.83.]{GM1},\cite[Lemma 2.2.]{S2}}]\label{formal}
The natural morphisms induce quasi-isomorphisms of the following DG vector spaces.
\begin{align*}
(\mathrm{ker}D'_K,D'')&\to(A^*(E),D),\\
(\mathrm{ker}D'_K,D'')&\to(A^*(E),D''),\\
(\mathrm{ker}D'_K,D'')&\to(\mathbb{H}^*_{DR},0),\\
(\mathrm{ker}D'_K,D'')&\to(\mathbb{H}^*_{Dol},0)\\
(\mathrm{ker}D'_K,D'')&\to(\mathbb{H}^*_{D'_K},0).
\end{align*}
\end{lemma}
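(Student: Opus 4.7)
}
The plan is to deduce all five quasi-isomorphisms from the $D'_KD''$-lemma and the coincidence of Laplacians $\Delta=2\Delta''=2\Delta'_K$. First I would verify that the arrows are well-defined chain maps. Since $D'_KD''+D''D'_K=0$, the operator $D''$ preserves $\ker D'_K$; the inclusion $\ker D'_K\hookrightarrow A^{*}(E)$ is therefore a chain map to $(A^*(E),D'')$, and also to $(A^*(E),D)$ because $D=D'_K+D''$ reduces to $D''$ on $\ker D'_K$. For the three maps to a cohomology target, the natural map is the $L^2$-harmonic projection $H\colon \ker D'_K\to\mathbb{H}^{*}$. Observe that $\mathbb{H}^{*}_{DR}=\mathbb{H}^{*}_{Dol}=\mathbb{H}^{*}_{D'_K}$ as subspaces of $A^*(E)$, since all three equal $\ker D'_K\cap\ker D''\cap\ker(D'_K)^{*}\cap\ker(D'')^{*}$ by the identity $\Delta=2\Delta''=2\Delta'_K$. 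This same identity forces $H$ to be a chain map: $D''\alpha$ is $D''$-exact, hence $L^2$-orthogonal to $\mathbb{H}^{*}$, so $HD''\alpha=0$.

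Next I would show $(\ker D'_K,D'')\hookrightarrow (A^*(E),D'')$ is a quasi-isomorphism. For surjectivity on cohomology, given $D''\alpha=0$, the Dolbeault Hodge decomposition gives $\alpha=H\alpha+D''(D'')^{*}G''\alpha$, and $H\alpha\in\ker D'_K$ represents the same class. For injectivity, suppose $\alpha\in\ker D'_K\cap\ker D''$ and $\alpha=D''\beta$. Then $\alpha\in\ker D'_K\cap\ker D''\cap(\mathrm{im}\,D'_K+\mathrm{im}\,D'')$, so by the $D'_KD''$-lemma there exists $\eta$ with $\alpha=D'_KD''\eta=-D''(D'_K\eta)$; since $D'_K(D'_K\eta)=0$, the element $\gamma:=-D'_K\eta$ lies in $\ker D'_K$ and witnesses $\alpha=D''\gamma$ there.

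The quasi-isomorphism to $(A^*(E),D)$ is proved by the same two moves. For surjectivity, given $D\alpha=0$ use the $D$-Hodge decomposition $\alpha=H\alpha+DD^{*}G\alpha$; because $\ker\Delta=\ker D'_K\cap\ker D''\cap\ker(D'_K)^{*}\cap\ker(D'')^{*}$, the harmonic part $H\alpha$ already belongs to $\ker D'_K$ and is $D''$-closed, so it lifts to a class in $H^{*}(\ker D'_K,D'')$ mapping to $[\alpha]_D$. For injectivity, suppose $\alpha\in\ker D'_K\cap\ker D''$ with $\alpha=D\beta=D'_K\beta+D''\beta$. Then $\alpha\in\ker D'_K\cap\ker D''\cap(\mathrm{im}\,D'_K+\mathrm{im}\,D'')$, so the $D'_KD''$-lemma again produces $\eta$ with $\alpha=-D''(D'_K\eta)$ and $D'_K\eta\in\ker D'_K$. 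Finally, for each of the three targets $(\mathbb{H}^{*}_{DR},0),\,(\mathbb{H}^{*}_{Dol},0),\,(\mathbb{H}^{*}_{D'_K},0)$, the harmonic projection $(\ker D'_K,D'')\to(\mathbb{H}^{*},0)$ can be factored as the inclusion $(\ker D'_K,D'')\hookrightarrow(A^{*}(E),D'')$ (respectively $(A^{*}(E),D)$ or $(A^{*}(E),D'_K)$) followed by the classical Hodge projection, and is therefore a quasi-isomorphism by the already proved cases together with the classical Hodge theorems; equivalently, one can check injectivity directly by noting that if $\alpha\in\ker D'_K\cap\ker D''$ and $H\alpha=0$, then $\alpha\in\mathrm{im}\,D''$ and the preceding $D'_KD''$-argument exhibits a preimage inside $\ker D'_K$.

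The main obstacle is the injectivity half of each argument: one must check in each case that the boundary producing the class lies in $\mathrm{im}\,D'_K+\mathrm{im}\,D''$ so that the $D'_KD''$-lemma applies, and then rearrange $D'_KD''\eta=-D''(D'_K\eta)$ to produce a cobounding element living in $\ker D'_K$. The case of $(A^{*}(E),D)$ is the most delicate because the boundary has the a priori mixed form $D'_K\beta+D''\beta$, and it is precisely the hypothesis $\mathrm{im}\,D'_K+\mathrm{im}\,D''$ in the $D'_KD''$-lemma that makes this work; everything else is bookkeeping with the Kähler identities already recorded above.
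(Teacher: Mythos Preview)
Your argument is correct, but it differs from the paper's in a couple of noteworthy ways. The paper proves only the first quasi-isomorphism $(\ker D'_K,D'')\to(A^*(E),D)$ explicitly. For surjectivity, the paper does \emph{not} invoke the Hodge decomposition of $\alpha$; instead, given $D\alpha=0$ it applies the $D'_KD''$-lemma to $D'_K\alpha$ (which is both $D'_K$- and $D''$-closed and $D'_K$-exact) to produce $\beta$ with $D'_K\alpha=D'_KD''\beta$, and then checks by hand that $\alpha-D\beta\in\ker D'_K\cap\ker D''$. For injectivity, the paper applies the $D'_KD''$-lemma not to $\alpha$ itself but to $D'_K\beta$ (where $\alpha=D\beta$), obtaining $\gamma$ with $D'_K\beta=D'_KD''\gamma$, and then verifies that $\beta-D\gamma\in\ker D'_K$ satisfies $D''(\beta-D\gamma)=\alpha$.

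Your route is a bit shorter: you use the harmonic representative $H\alpha$ directly for surjectivity, and for injectivity you apply the $D'_KD''$-lemma straight to $\alpha\in\ker D'_K\cap\ker D''\cap(\mathrm{im}\,D'_K+\mathrm{im}\,D'')$ to get $\alpha=-D''(D'_K\eta)$ with $D'_K\eta\in\ker D'_K$. The paper's approach has the mild advantage of staying closer to the classical DGMS pattern (one only needs the $D'_KD''$-lemma, not an explicit harmonic projection, so the argument would transplant to any setting where a $dd^c$-type lemma holds without a metric), whereas yours exploits the equality $\Delta=2\Delta''=2\Delta'_K$ more aggressively and thereby avoids the two-step correction. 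Either is fine here since the Kähler identities and Hodge theory are already in play.
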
  
\begin{proof}
We only prove the quasi-isomorphism of $i:(\mathrm{ker}D'_K,D'')\to(A^*(E),D)$.\par
$H^*(i)$ is surjective: Let $\alpha\in \mathrm{Ker}{D}$. We now consider $D'_K\alpha$ and show it is $D''$-closed. Since $K$ is a harmonic metric,  $D'_KD''+D''D'_K=0$. Therefore, $D''D'_K\alpha=-D'_KD''\alpha=D'_KD'_K\alpha=0$. The second equation follows from the assumption of $\alpha$. As $D'_K\alpha$ is $D'_K$-closed, we can apply the $D'_KD''$-lemma. Hence there exists a $\beta$ such that
\begin{equation*} 
D'_K\alpha=D'_KD''\beta.
\end{equation*}
 Moreover
 \begin{equation*}
  D''\alpha=-D'_KD''\beta.
  \end{equation*}
  Now we consider $\alpha-D\beta.$ From the equations
  \begin{align*}
  D'_K(\alpha-D\beta)&=D'_K\alpha-D'_KD''\beta=0,\\
  D''(\alpha-D\beta)&=D''\alpha-D''D'_K\beta=0,
    \end{align*}
    We have $\alpha-D\beta\in\mathrm{Ker}D'_K\cap\mathrm{Ker}D''$. Hence $\alpha-D\beta$ defines a cohomology class of $(\mathrm{ker}D'_K, D'')$. $H^*(i)$ maps the cohomology class of $\alpha-D\beta$ to the cohomology class of $\alpha$. Hence $H^*(i)$ is surjective.\par
    $H^*(i)$ is injective: Let $\alpha\in \mathrm{Ker}D'_K\cap\mathrm{Ker}D''$ and we assume that there exists a $\beta$ such that $\alpha=D\beta$. Under this assumption, We can apply the $D'_KD''$-lemma to $D'_K\beta$. Then there exists a $\gamma$ such that 
    \begin{equation*}
    D'_K\beta=D'_KD''\gamma.
       \end{equation*}
 Now we consider $\beta-D\gamma$. From the equations
 \begin{align*}
 D'_K(\beta-D\gamma)&=D'_K\beta-D'_KD''\gamma=0,\\
  D''(\beta-D\gamma)=D''\beta&-D''D'_K\gamma=D''\beta+D'_K\beta=D\beta=\alpha,
     \end{align*}      
     we obtain $\alpha\in D''(\mathrm{Ker}D'_K)$. Hence the cohomology class which $\alpha$ defines in $(\mathrm{ker}D'_K,D'')$ is 0. Therefore $H^*(i)$ is injective.
    \end{proof}
Let $E^*$ be the dual of $E$. For any $p,q\in\mathbb{Z}_{\ge0}$, $E^{*\otimes p}\otimes E^{\otimes q}$ has a induced harmonic metric from $E$. Hence, $(A^*(E^{*\otimes p}\otimes E^{\otimes q}), D)$ and $(A^*(E^{*\otimes p}\otimes E^{\otimes q}), D'')$ satisfy fomrality condition. We now focus on $p=q=1$ case. In this case, $E^*\otimes E=\mathrm{End}E$ and $A^*(\mathrm{End}E)$ has a naturally defined bracket $[,]_{\mathrm{End}E}$ such that for $A\in A^i(\mathrm{End}E)$ and $B\in A^j(\mathrm{End}E)$,
\begin{equation*}
[A,B]_{\mathrm{End}E}=A\wedge B-(-1)^{ij}B\wedge A.
\end{equation*}
By some calculation, we can show that $(A^*(\mathrm{End}E),[,]_{\mathrm{End}E},D)$ and $(A^*(\mathrm{End}E),[,]_{\mathrm{End}E},D'')$ are DGLA. Therefore, by Lemma \ref{formal}, we have the following lemma.
\begin{lemma}
$(A^*(\mathrm{End}E),[,]_{\mathrm{End}E},D)$ and $(A^*(\mathrm{End}E),[,]_{\mathrm{End}E},D'')$ are formal as DGLA.
\end{lemma}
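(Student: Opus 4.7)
The plan is to exhibit each of the two DGLAs as quasi-isomorphic, through a zig-zag, to a DGLA carrying the zero differential, by upgrading the DG-vector-space quasi-isomorphisms of Lemma \ref{formal} to morphisms of DGLAs. The natural zig-zag passes through the sub-DGLA $\mathrm{ker}\,D'_K$ and through the quotient of that sub-DGLA by the ideal $\mathrm{im}\,D'_K$; everything then reduces to two compatibility checks — the subspaces $\mathrm{ker}\,D'_K$ and $\mathrm{im}\,D'_K$ must behave correctly with respect to $[,]_{\mathrm{End}E}$, and the differential $D''$ must descend to zero on the quotient.

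For the first check, since $D'_K$ is a graded derivation of the bracket $[,]_{\mathrm{End}E}$, the subspace $\mathrm{ker}\,D'_K$ is a graded Lie subalgebra and $\mathrm{im}\,D'_K\subset\mathrm{ker}\,D'_K$ is a graded Lie ideal of it: for $\gamma\in A^*(\mathrm{End}E)$ and $\beta\in\mathrm{ker}\,D'_K$, a Koszul-sign computation gives $[D'_K\gamma,\beta]_{\mathrm{End}E}=\pm D'_K[\gamma,\beta]_{\mathrm{End}E}\in\mathrm{im}\,D'_K$. In particular $(\mathrm{ker}\,D'_K,[,]_{\mathrm{End}E},D'')$ is a sub-DGLA of $(A^*(\mathrm{End}E),[,]_{\mathrm{End}E},D'')$, and $Q:=\mathrm{ker}\,D'_K/\mathrm{im}\,D'_K$ inherits a graded Lie bracket together with a differential induced by $D''$.

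For the second check, if $\alpha\in\mathrm{ker}\,D'_K$ then $D''\alpha$ is $D'_K$-closed (using $D'_K D''=-D''D'_K$), $D''$-closed, and lies in $\mathrm{im}\,D''\subset\mathrm{im}\,D'_K+\mathrm{im}\,D''$; the $D'_K D''$-lemma then yields $D''\alpha\in\mathrm{im}\,D'_K D''\subset\mathrm{im}\,D'_K$, so the induced differential on $Q$ vanishes. This produces a diagram of DGLA morphisms
\begin{equation*}
(A^*(\mathrm{End}E),[,]_{\mathrm{End}E},D'')\hookleftarrow(\mathrm{ker}\,D'_K,[,]_{\mathrm{End}E},D'')\twoheadrightarrow(Q,[,]_{\mathrm{End}E},0).
\end{equation*}
The left arrow is a quasi-isomorphism by Lemma \ref{formal}. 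For the right arrow, a further use of the $D'_K D''$-lemma handles injectivity on cohomology (if $\alpha\in\mathrm{ker}\,D'_K\cap\mathrm{ker}\,D''\cap\mathrm{im}\,D'_K$ then the lemma gives $\alpha=D'_K D''\eta=-D''(D'_K\eta)$ with $D'_K\eta\in\mathrm{ker}\,D'_K$) and surjectivity (writing $D''\alpha=D'_K D''\mu$ via the lemma shows that $\alpha-D'_K\mu\in\mathrm{ker}\,D'_K\cap\mathrm{ker}\,D''$ represents the class $\alpha+\mathrm{im}\,D'_K$).

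The same zig-zag proves formality of $(A^*(\mathrm{End}E),[,]_{\mathrm{End}E},D)$, since $D$ restricts to $D''$ on $\mathrm{ker}\,D'_K$: the middle and right terms of the diagram are unchanged, and the inclusion is a DG-vector-space quasi-isomorphism for the differential $D$ by the first line of Lemma \ref{formal}. The main obstacle is the two interlocking uses of the $D'_K D''$-lemma — one to conclude that the induced differential on $Q$ is zero, and one to prove $\pi$ is a cohomology isomorphism; everything else reduces to a routine check of the sub-/quotient-DGLA formalism.
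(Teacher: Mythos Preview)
Your argument is correct and is exactly what the paper invokes by citing Lemma \ref{formal}: the zig-zag $(A^*(\mathrm{End}E),D'')\hookleftarrow(\mathrm{ker}D'_K,D'')\twoheadrightarrow(\mathbb{H}^*_{D'_K},0)$ (and the analogous one for $D$) is built from the natural morphisms of that lemma, and you have simply supplied the bracket-compatibility checks that make them DGLA maps. One minor slip in the surjectivity step: since $D''D'_K\mu=-D'_KD''\mu=-D''\alpha$, the $D''$-closed representative is $\alpha+D'_K\mu$, not $\alpha-D'_K\mu$.
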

 \section{Structure of Kuranishi space}
In this section, we study the structure of the analytic germ $(Kur_{(X,E,\theta)},0)$ when $(X,\omega)$ is a compact K\"ahler manifold and the Higgs bundle $(E,\overline\partial_E,\theta)$ on $X$ has a harmonic metric $ K.$ We prove that $(Kur_{(X,E,\theta)},0)\simeq (Kur_{(E,\theta)}\times Kur_{X},0)$ as analytic germs. We prove that by showing certain DGLAs are quasi-isomorphic and apply Theorem \ref{Main GM}.\par
Throughout this section, the DGLA $(L,[,]_L,d_L)$ is the DGLA in the Theroem \ref{Main DGLA}.
\subsection{DGLA}
In this section, we study the differential of $(L,[,]_L,d_L)$ when $(X,\omega)$ is a compact K\"ahler manifold and $(E,\overline\partial_E,\theta,K)$ is a harmonic bundle.
\begin{proposition}\label{do har}
When $X$ is a compact K\"ahler manifold and $(E,\overline\partial_E,\theta,K)$ is a harmonic bundle over $X$, then the differential $d_L$ of the DGLA $(L,[,]_L,d_L)$ acts on $(A,\phi)\in L^i$ as 
\begin{equation*}
d_L
\begin{pmatrix}
A\\
\phi
\end{pmatrix}
=
\begin{pmatrix}
D''A+D'_K(\phi\lrcorner \theta)\\
\overline\partial_{TX}\phi
\end{pmatrix}.
\end{equation*}
\begin{proof}
The second row is from the definition of $d_L$. From the definition of $d_L$, the first row of 
\begin{equation*}
d_L
\begin{pmatrix}
A\\
\phi
\end{pmatrix}
\end{equation*}
is 
\begin{equation*}
\overline\partial_{\mathrm{End}E}A+(-1)^i\phi\lrcorner F_{d_K}+[\theta, A]+\{\partial_K,\phi\lrcorner\}\theta.
\end{equation*}
Since $K$ is a harmonic metric, $D=D'_K+D''$ is flat. Therefore, the $(2,0)$-part and the $(1,1)$-part of $D^2$ is 0. The $(2,0)$-part is $\partial_K\theta$ and the $(1,1)$-part is $F_{d_K}+[\theta,\theta_K^\dagger]$. Hence we have the equality
\begin{align*}
\overline\partial_{\mathrm{End}E}A+(-1)^i\phi\lrcorner F_{d_K}+[\theta, A]+\{\partial_K,\phi\lrcorner\}\theta&=D''A-(-1)^i\phi\lrcorner[\theta,\theta_K^\dagger]+\partial_K(\phi\lrcorner\theta)\\
&=D''A+[\theta_K^\dagger,\phi\lrcorner\theta]+\partial_K(\phi\lrcorner\theta)\\
&=D''A+D'_K(\phi\lrcorner \theta).
\end{align*}
Hence the claim is proved.
\end{proof}
\end{proposition}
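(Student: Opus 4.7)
The plan is to verify the equality componentwise by unpacking the definition of $d_L$ from Section \ref{DGLA} and matching it against the target expression. The bottom component is immediate: the matrix form of $d_L$ has $\overline\partial_{TX}$ in the lower-right corner and zero in the lower-left, so the second row simply returns $\overline\partial_{TX}\phi$.

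For the top component, I would first expand $d_L(A,\phi)$ using the defining matrices together with the formulas for $B$ and $C_i$, giving
\begin{equation*}
\overline\partial_{\mathrm{End}E} A + [\theta,A] + (-1)^i\phi\lrcorner F_{d_K} + \{\partial_K,\phi\lrcorner\}\theta.
\end{equation*}
I would then expand the target $D''A + D'_K(\phi\lrcorner\theta)$ using $D''=\overline\partial_E+\theta$ and $D'_K=\partial_K+\theta_K^\dagger$ to obtain
\begin{equation*}
\overline\partial_{\mathrm{End}E} A + [\theta,A] + \partial_K(\phi\lrcorner\theta) + [\theta_K^\dagger,\phi\lrcorner\theta].
\end{equation*}
Two terms agree on the nose, so the proposition reduces to showing
\begin{equation*}
(-1)^i\phi\lrcorner F_{d_K} + \{\partial_K,\phi\lrcorner\}\theta = \partial_K(\phi\lrcorner\theta) + [\theta_K^\dagger,\phi\lrcorner\theta].
\end{equation*}

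The key input is the flatness of $D=D'_K+D''$, the defining condition for $K$ to be a harmonic metric. Decomposing $D^2=0$ by Hodge bidegree extracts three identities: the $(2,0)$-part gives $\partial_K\theta=0$, the $(1,1)$-part gives $F_{d_K}+[\theta,\theta_K^\dagger]=0$, and the $(0,2)$-part gives $\overline\partial_E\theta_K^\dagger=0$ (together with $\overline\partial_E\theta=0$ from the Higgs condition). Unpacking the definition $\{\partial_K,\phi\lrcorner\}\theta=\partial_K(\phi\lrcorner\theta)+(-1)^i\phi\lrcorner\partial_K\theta$ and using $\partial_K\theta=0$ collapses this term to $\partial_K(\phi\lrcorner\theta)$, matching one term on the right.

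What remains is the purely algebraic identity
\begin{equation*}
(-1)^i\phi\lrcorner F_{d_K} = [\theta_K^\dagger,\phi\lrcorner\theta],
\end{equation*}
which after substituting $F_{d_K}=-[\theta,\theta_K^\dagger]$ becomes a question of how contraction by $\phi\in A^{0,i}(TX)$ interacts with the bracket $[\theta,\theta_K^\dagger]$ of two $(1,0)$-forms with values in $\mathrm{End}E$. The expected obstacle is this last step: it is the only place where careful sign bookkeeping is needed, since one must track the Koszul signs arising when the degree-$i$ operator $\phi\lrcorner$ is pushed across $\theta$ inside a graded commutator. Once that derivation/sign identity is verified, the proposition follows by collecting the matching terms.
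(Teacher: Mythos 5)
Your proposal is correct and follows essentially the same route as the paper: expand $d_L$ by definition, use flatness of $D$ to extract $\partial_K\theta=0$ from the $(2,0)$-part and $F_{d_K}=-[\theta,\theta_K^\dagger]$ from the $(1,1)$-part, and reduce to the remaining contraction identity. The final sign check you flag, namely $-(-1)^i\phi\lrcorner[\theta,\theta_K^\dagger]=[\theta_K^\dagger,\phi\lrcorner\theta]$ (using that $\phi\lrcorner$ annihilates the $(0,1)$-factor $\theta_K^\dagger$), is exactly the one-line step the paper performs, so there is no gap beyond writing it out.
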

\subsection{Quasi-isomorphisms of DGLAs}
In this section, we prove quasi-isomorphisms of certain DGLAs. Let $(A^*(TX),[,]_{SN},\overline\partial_{TX})$ be the Kodaira-Spencer algebra. We first state the main result of this section.
\begin{theorem}\label{q iso}
$(L,[,]_L,d_L)$ is quasi-isomorphic to $(A^*(\mathrm{End}E),[,]_{\mathrm{End}E},D)\oplus(A^*(TX),[,]_{SN},\overline\partial_{TX})$.
\end{theorem}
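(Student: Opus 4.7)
The approach is to build a zig-zag of DGLA morphisms $L\ \hookleftarrow\ L_1\ \xrightarrow{\tilde g}\ N$, each a quasi-isomorphism of the underlying complexes, where $N:=(A^*(\mathrm{End}E),[\,,\,]_{\mathrm{End}E},D)\oplus(A^*(TX),[\,,\,]_{SN},\overline\partial_{TX})$. By Proposition \ref{do har} the differential on $L$ reduces to $d_L(A,\phi)=(D''A+D'_K(\phi\lrcorner\theta),\overline\partial_{TX}\phi)$, to be compared with $d_N(A,\phi)=(D''A+D'_KA,\overline\partial_{TX}\phi)$; the two differ only by replacing $D'_KA$ with $D'_K(\phi\lrcorner\theta)$ in the End-E component. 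The central algebraic input is
\[
D''(\phi\lrcorner\theta)=(\overline\partial_{TX}\phi)\lrcorner\theta,
\]
a direct consequence of $\overline\partial_E\theta=0$ (holomorphicity of $\theta$) together with $\theta\wedge\theta=0$, which forces $[\theta_k,\theta_j]=0$ in any local holomorphic frame and kills the would-be $[\theta,\phi\lrcorner\theta]_{\mathrm{End}E}$ contribution.

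I would take $L_1:=\{(A,\phi)\in L:D'_KA=0\}$. Stability under $d_L$ is immediate from $(D'_K)^2=0$ and $D'_KD''+D''D'_K=0$. Stability of $L_1$ under $[\,,\,]_L$ reduces to the identity $D'_K\{\partial_K,\psi\lrcorner\}A=0$ when $D'_KA=0$, which I would check by direct calculation using $\partial_K^2=0$, $(\theta_K^\dagger)^2=0$, the relation $\partial_K\theta_K^\dagger+\theta_K^\dagger\partial_K=0$ (equivalent to $(D'_K)^2=0$), and the commutation $\psi\lrcorner(\theta_K^\dagger\alpha)=(-1)^{j+1}\theta_K^\dagger(\psi\lrcorner\alpha)$, combined with the Leibniz rule for $D'_K$ on the $[A,B]_{\mathrm{End}E}$ term. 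The twist $\tilde g\colon L_1\to N$, $\tilde g(A,\phi):=(A+\phi\lrcorner\theta,\phi)$, is then a chain map: expanding $d_N\tilde g(A,\phi)-\tilde g d_L(A,\phi)$ and applying the key identity reduces the discrepancy exactly to $(D'_KA,0)$, which vanishes on $L_1$.

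Both maps in the zig-zag are quasi-isomorphisms. For $L_1\hookrightarrow L$, the quotient complex $L/L_1$ identifies with $(A^*(\mathrm{End}E)/\ker D'_K,D'')$; the Hodge splitting $A^*(\mathrm{End}E)=\ker D'_K\oplus\mathrm{Im}(D'_K)^*$ (valid because $\Delta''=\Delta'_K$) further identifies this with $(\mathrm{Im}(D'_K)^*,D''|)$, the differential being well-defined via the K\"ahler identity $D''(D'_K)^*=-(D'_K)^*D''$. Acyclicity follows from a standard orthogonal-decomposition argument: a $D''$-closed element of $\mathrm{Im}(D'_K)^*$ has vanishing harmonic part (since $\mathbb{H}\subset\ker D'_K$ is orthogonal to $\mathrm{Im}(D'_K)^*$), and a $D''$-primitive for it can be chosen inside $\mathrm{Im}(D'_K)^*$. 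For $\tilde g$, its image in $N$ is exactly $N_0:=\{(A',\phi)\in N:D'_K(A'-\phi\lrcorner\theta)=0\}$, and $\tilde g\colon L_1\to N_0$ is an isomorphism of complexes; the parallel acyclicity argument applied to $N/N_0\cong(\mathrm{Im}\,D'_K,-D'')[1]$ shows $N_0\hookrightarrow N$ is a quasi-iso, hence so is $\tilde g\colon L_1\to N$.

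The main technical obstacle will be verifying that $\tilde g$ is a morphism of DGLAs, not merely a chain map. Expanding $[A+\phi\lrcorner\theta,B+\psi\lrcorner\theta]_{\mathrm{End}E}$ and matching against $\tilde g\circ[\,,\,]_L$ requires a Cartan-style identity relating $(-1)^i\{\partial_K,\psi\lrcorner\}A$ to the End-E commutator with $\psi\lrcorner\theta$, the vanishing $[\phi\lrcorner\theta,\psi\lrcorner\theta]_{\mathrm{End}E}=0$ (an immediate consequence of $\theta\wedge\theta=0$), and a compatibility of the Schouten-Nijenhuys contraction $[\phi,\psi]_{SN}\lrcorner\theta$ with these other pieces. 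The sign bookkeeping is delicate but all the ingredients flow from $\theta\wedge\theta=0$, $\overline\partial_E\theta=0$, and the harmonic-metric relations. Once $\tilde g$ is established as a DGLA quasi-isomorphism, Theorem \ref{Main GM} applied along the zig-zag delivers the theorem.
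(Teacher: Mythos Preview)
Your sub-DGLA $L_1=\ker D'_K\oplus A^*(TX)$ and the chain-map verification for $\tilde g$ are correct and match the paper's Proposition~\ref{q iso1}. The gap is precisely the step you flag as the ``main technical obstacle'': $\tilde g$ is \emph{not} a morphism of DGLAs, and no Cartan-type identity of the kind you hope for exists. On $L_1$ the bracket reads (equation~\eqref{bracket ker})
\[
[(A,\phi),(B,\psi)]_L=\bigl((-1)^iD'_K(\psi\lrcorner A)-(-1)^{(i+1)j}D'_K(\phi\lrcorner B)-[A,B]_{\mathrm{End}E},\ [\phi,\psi]_{SN}\bigr).
\]
Take $\phi=0$ and $B=0$: then $[(A,0),(0,\psi)]_L=\bigl((-1)^iD'_K(\psi\lrcorner A),0\bigr)$, whereas $[\tilde g(A,0),\tilde g(0,\psi)]_N=\bigl(\pm[A,\psi\lrcorner\theta]_{\mathrm{End}E},0\bigr)$. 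The former is a genuine first-order differential operator in $A$ (it contains $\partial_K(\psi\lrcorner A)$); the latter is $C^\infty(X)$-linear in $A$. They cannot agree for general $A\in\ker D'_K$, so no amount of sign bookkeeping or use of $\theta\wedge\theta=0$ will close the gap. (There is also a sign mismatch on the pure $[A,B]_{\mathrm{End}E}$ term, curable by a minus sign in the first slot of $\tilde g$; the differential-versus-algebraic mismatch is not curable.)

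The paper's proof circumvents exactly this obstruction by inserting an additional node in the zig-zag: from $L_1$ one maps to $\bigl(\mathbb{H}^*_{D'_K}\oplus A^*(TX),\ [\,,\,]_{\mathrm{End}E}\oplus[\,,\,]_{SN},\ \mathrm{diag}(0,\overline\partial_{TX})\bigr)$ via $(A,\phi)\mapsto(-[A]_{D'_K},\phi)$ (Proposition~\ref{q iso2}). Passing to $D'_K$-cohomology kills precisely the $D'_K$-exact correction terms in $[\,,\,]_L$, so the induced bracket becomes the honest direct-sum bracket; the $D'_KD''$-lemma supplies the quasi-isomorphism. From this cohomology node one then reaches $N$ via the formality of $(A^*(\mathrm{End}E),D)$ (Lemma~\ref{formal}). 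The detour through cohomology is therefore essential, not cosmetic: it is what erases the incompatibility between $[\,,\,]_L$ and $[\,,\,]_{\mathrm{End}E}\oplus[\,,\,]_{SN}$ that blocks your direct map $\tilde g$.
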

We first prove the following Proposition.
\begin{proposition}\label{q iso1}
$(\mathrm{Ker}D'_K\oplus A^*(TX),[,]_L,d_L)$ is a sub DGLA of $(L,[,],d_L)$ and the canonical morphism $i:(\mathrm{Ker}D'_K\oplus A^*(TX),[,]_L,d_L)\to(L,[,]_L,d_L)$ is a quasi-isomorphism.
\end{proposition}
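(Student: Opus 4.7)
The plan breaks into two movements: verify that $\mathrm{Ker}\,D'_K\oplus A^*(TX)$ is preserved by both $d_L$ and $[\cdot,\cdot]_L$, then check that the inclusion induces isomorphisms on every $H^k$. For closure under $d_L$, I would apply Proposition \ref{do har}: for $(A,\phi)$ with $D'_KA=0$ one has $d_L(A,\phi)=(D''A+D'_K(\phi\lrcorner\theta),\overline\partial_{TX}\phi)$, and the first component lies in $\mathrm{Ker}\,D'_K$ because $D'_KD''A=-D''D'_KA=0$ and $(D'_K)^2(\phi\lrcorner\theta)=0$; both $(D'_K)^2=0$ and $D'_KD''+D''D'_K=0$ follow immediately from $D^2=0$ by a type decomposition.

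The hard part is closure under the bracket. The $[A,B]_{\mathrm{End}E}$ summand is handled by the graded Leibniz rule for $D'_K$ on $\mathrm{End}\,E$-valued forms. For the summands $\{\partial_K,\psi\lrcorner\}A$ and $\{\partial_K,\phi\lrcorner\}B$, I would substitute $\partial_K=D'_K-\mathrm{ad}(\theta^\dagger_K)$ and expand, using the harmonic-metric identities produced by type-decomposing $D^2=0$ — in particular $\partial_K\theta^\dagger_K+(\theta^\dagger_K)^2=0$, $\partial_K\theta=0$, and $F_K+[\theta,\theta^\dagger_K]=0$ — together with $D'_KA=D'_KB=0$ to check that $D'_K$ annihilates each such term. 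This is the bookkeeping-heavy step and the only place where the full harmonic-metric data is actually required; everything else uses only $(D'_K)^2=0$ and $D'_KD''+D''D'_K=0$.

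For the quasi-isomorphism part, I would follow the pattern of Lemma \ref{formal}. Surjectivity on cohomology: pick $(A,\phi)$ with $d_L(A,\phi)=0$, so $D''A=-D'_K(\phi\lrcorner\theta)$ and $\overline\partial_{TX}\phi=0$. Applying $D'_K$ to the first equation yields $D''D'_KA=-D'_KD''A=(D'_K)^2(\phi\lrcorner\theta)=0$, so $D'_KA$ is simultaneously $D'_K$-closed, $D''$-closed, and in $\mathrm{Im}\,D'_K$; the $D'_KD''$-lemma then produces $C$ with $D'_KA=D'_KD''C$, and $(A-D''C,\phi)=(A,\phi)-d_L(C,0)$ is $d_L$-closed, cohomologous to $(A,\phi)$, and belongs to $\mathrm{Ker}\,D'_K\oplus A^*(TX)$. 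Injectivity: if $(A,\phi)\in\mathrm{Ker}\,D'_K\oplus A^*(TX)$ equals $d_L(B,\psi)$, then $D'_KA=0$ forces $D''D'_KB=0$; the same lemma produces $C'$ with $D'_KB=D'_KD''C'$, so $B':=B-D''C'$ satisfies $D'_KB'=0$ while still $d_L(B',\psi)=(A,\phi)$, showing that $(A,\phi)$ is already exact inside the subcomplex.
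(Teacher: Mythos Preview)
Your proposal is correct, and the quasi-isomorphism half is in fact cleaner than the paper's. Two points of comparison are worth noting.

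For closure under $[\cdot,\cdot]_L$, the paper obtains a sharper identity than mere $D'_K$-closedness: when $D'_KA=0$ one has $\{\partial_K,\psi\lrcorner\}A = D'_K(\psi\lrcorner A)$ outright, because $\partial_KA=-[\theta^\dagger_K,A]$ and $\psi\lrcorner$ passes through the $(0,1)$-form $\theta^\dagger_K$ with the right sign. Your substitution $\partial_K=D'_K-\mathrm{ad}(\theta^\dagger_K)$ reaches the same identity once one observes $\{\mathrm{ad}(\theta^\dagger_K),\psi\lrcorner\}=0$; the additional curvature identities you list are not actually needed at this step, so your outline is correct but slightly overloaded. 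The paper exploits the resulting formula~(\ref{bracket ker}) again in Proposition~\ref{q iso2}.

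For surjectivity on cohomology, your use of the $D'_KD''$-lemma applied to $D'_KA$ is a genuinely different and more economical route. The paper instead expands the Hodge decomposition of $A$ with respect to $D''$, invokes the K\"ahler identity $(D'')^*=-\sqrt{-1}[\Lambda_\omega,D'_K]$, and extracts the correction $\sqrt{-1}D'_KG''\Lambda_\omega D''A$ by hand using the Green operator. Your approach is closer in spirit to the proof of Lemma~\ref{formal} and avoids the analytic machinery entirely. For injectivity the two arguments are essentially the same; you apply the lemma to $D'_KB$ whereas the paper applies it to $D''B$, and either choice works.
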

\begin{proof}
By the definition of $d_L$, it is easy to see that $(\mathrm{Ker}D'_K\oplus A^*(TX),d_L)$ is a sub DG vector space of $(L,d_L)$. We show that $[,]_L$ preserves $\mathrm{Ker}D'_K\oplus A^*(TX)$. Let $\alpha:=(A,\phi),\beta:=(B,\psi)\in\mathrm{Ker}D'_K\oplus A^*(TX)$. We have
\begin{align*}
[\alpha,\beta]_L&=
\begin{pmatrix}
(-1)^i\{\partial_K,\psi\lrcorner\}A-(-1)^{(i+1)j}\{\partial_K,\phi\lrcorner\}B-[A,B]\\
[\phi,\psi]_{SN}
\end{pmatrix}\\
&=
\begin{pmatrix}
(-1)^i\{\partial_K,\psi\lrcorner\}A-(-1)^{(i+1)j}\{\partial_K,\phi\lrcorner\}B-[A,B]\\
[\phi,\psi]_{SN}
\end{pmatrix}
\end{align*}
Since $A\in\mathrm{Ker}D'_K$, we have
\begin{align*}
\{\partial_K,\psi\lrcorner\}A&=\partial_K(\psi\lrcorner A)+(-1)^j\psi\lrcorner\partial_K A\\
&=\partial_K(\psi\lrcorner A)-(-1)^j\psi\lrcorner\theta^\dagger_KA\\
&=\partial_K(\psi\lrcorner A)+\theta^\dagger_K\psi\lrcorner A\\
&=D'_K(\psi\lrcorner A).
\end{align*}
Hence we have
\begin{equation}\label{bracket ker}
[\alpha,\beta]_L=
\begin{pmatrix}
(-1)^iD'_K(\psi\lrcorner A)-(-1)^{(i+1)j}D'_K(\phi\lrcorner B)-[A,B]\\
[\phi,\psi]_{SN}
\end{pmatrix}.
\end{equation}
Hence $[\alpha,\beta]_L\in \mathrm{Ker}D'_K\oplus A^*(TX)$. Therefore, $(\mathrm{Ker}D'_K\oplus A^*(TX),[,]_L,d_L)$ is a sub DGLA of $(L,[,]_L,d_L)$.
\par
 We show that the natural morphism $i:(\mathrm{Ker}D'_K\oplus A^*(TX),[,]_L,d_L)\to(L,[,],d_L)$ is a quasi-isomorphism.\par
$H^*(i)$ is surjective: Let $\eta:=(A,\phi)\in\mathrm{Ker}d_L$. We want to show that there exist
\begin{equation*}
 \eta'=(A',\phi')\in \bigg(\mathrm{Ker}D'_K\oplus A^*(TX)\bigg)\cap \mathrm{ker}d_L
 \end{equation*}
  and $\gamma\in L$ such that
  \begin{equation*} 
  \eta-\eta'=d_L\gamma.
  \end{equation*}
By Proposition \ref{do har}, we have
\begin{equation*}
d_L\eta=
\begin{pmatrix}
D''A+D'_K(\phi\lrcorner \theta)\\
\overline\partial_{TX}\phi
\end{pmatrix}
=0.
\end{equation*}
Let $\mathcal{A}$ be the harmonic projection of $A$ with respect to $D''$. The Hodge decomposition of $A$ with respect to $D''$ is 
\begin{align*}
A=\mathcal{A}+G''\Delta'' A&=\mathcal{A}+G''D''(D'')^*A+G''(D'')^*D''A\\
&=\mathcal{A}+D''(D'')^*G''A-\sqrt{-1}G''[\ \Lambda_\omega,D'_K]\ D''A\\
&=\mathcal{A}+D''(D'')^*G''A-\sqrt{-1}G''\Lambda_\omega D'_KD''A+\sqrt{-1}G''D'_K\Lambda_\omega D''A\\
&=\mathcal{A}+D''(D'')^*G''A+\sqrt{-1}G''\Lambda_\omega D'_KD'_K(\phi\lrcorner \theta)+\sqrt{-1}D'_KG''\Lambda_\omega D''A\\
&=\mathcal{A}+D''(D'')^*G''A+\sqrt{-1}D'_KG''\Lambda_\omega D''A.
\end{align*}
The compatibility of $D'_K$ and $G''$ follows from the fact that $G''=G'_K$.\par
We set
 \begin{align*}
 \eta':&=
 \begin{pmatrix}
 \mathcal{A}+\sqrt{-1}D'_KG''\Lambda_\omega D''A\\
  \phi
  \end{pmatrix},\\
   \gamma:&=
   \begin{pmatrix}
   (D'')^*G''A\\
   0
   \end{pmatrix}.
\end{align*}
Since $D''(\mathcal{A}+\sqrt{-1}D'_KG''\Lambda_\omega D''A)=\sqrt{-1}D''D'_KG''\Lambda_\omega D''A=D''A$ and $\mathcal{A}$ is the harmonic projection of $A$,
$\eta'\in\bigg(\mathrm{Ker}D'_K\oplus A^*(TX)\bigg)\cap \mathrm{ker}d_L$. By direct calculation, we can check $\eta-\eta'=d_L\gamma$. Hence, $H^*(i)$ is surjective.\par
$H^*(i)$ is injective: Let $\eta:=(A,\phi)\in\bigg(\mathrm{Ker}D'_K\oplus A^*(TX)\bigg)\cap \mathrm{ker}d_L$. We assume that there exsits a $\beta:=(B,\psi)\in L$ such that $\alpha=d_L\beta.$ Under this assumption, we have
\begin{equation*}
\begin{pmatrix}
A\\
\phi
\end{pmatrix}
=\alpha=d_L\beta=
\begin{pmatrix}
D''B+D'_K(\psi\lrcorner\theta)\\
\overline\partial_{TX}\psi
\end{pmatrix}.
\end{equation*}
Since $A\in\mathrm{ker}D'_K$, $D''B\in\mathrm{ker}D'_K\cap\mathrm{ker}D''\cap(\mathrm{im}D''+\mathrm{im}D'_K)$. Hence we can apply $D'_KD''$-lemma to $D''B$. Let $C\in A^*(\mathrm{End}E)$ such that $D''B=D''D'_KC$. We set,
\begin{equation*}
\gamma:=
\begin{pmatrix}
D'_KC\\
\psi
\end{pmatrix}.
\end{equation*}
Then $\gamma\in\mathrm{Ker}D'_K\oplus A^*(TX) $ and $\alpha=d_L\gamma$ stands. Hence we showed that $H^*(i)$ is injective.
\end{proof}
For $A\in A^*(E)$, let  $[A]_{D'_K}$ be the cohomology class in $\mathbb{H}^*_{D'_K}$. Let $Q:\mathrm{Ker}D'_K\to \mathbb{H}^*_{D'_K}$ be the $\mathbb{C}$-linear map such that $Q(A)=[A]_{D'_K}$.
\begin{proposition}\label{q iso2}
The morphism 
\begin{equation*}
\begin{pmatrix}
-Q&0\\
0&Id_{TX}
\end{pmatrix}:
(\mathrm{Ker}D'_K\oplus A^*(TX),[,]_L,d_L)\to\bigg(\mathbb{H}^*_{D'_K}\oplus A^*(TX),[,]_{\mathrm{End}E}\oplus [,]_{SN},
\begin{pmatrix}
0&0\\
0&\overline\partial_{TX}
\end{pmatrix}\bigg)
\end{equation*}
is a morphism of DGLA and it is a quasi-isomorphism.
\end{proposition}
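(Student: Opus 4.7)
The plan is to verify in turn that (i) $M:=\begin{pmatrix}-Q&0\\0&Id_{TX}\end{pmatrix}$ is a chain map, (ii) $M$ respects the DGLA brackets, and (iii) $M$ induces isomorphisms on cohomology. The key technical input throughout is the $D'_KD''$-lemma.

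First I would verify the chain map property. On the second component both differentials are $\overline\partial_{TX}$ and the map is the identity, so there is nothing to check. For the first component, since the target differential vanishes on $\mathbb{H}^*_{D'_K}$, I need $Q(D''A+D'_K(\phi\lrcorner\theta))=0$ for every $(A,\phi)\in\mathrm{Ker}D'_K\oplus A^*(TX)$. The term $D'_K(\phi\lrcorner\theta)$ is manifestly $D'_K$-exact. For $D''A$, note that $D''A\in\mathrm{Ker}D''\cap\mathrm{Ker}D'_K\cap\mathrm{im}D''$ (using $D'_KD''A=-D''D'_KA=0$), so the $D'_KD''$-lemma gives $D''A=D'_KD''C$ for some $C$, hence $Q(D''A)=0$. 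This is precisely the fact, already present in Lemma~\ref{formal}, that $Q\colon(\mathrm{Ker}D'_K,D'')\to(\mathbb{H}^*_{D'_K},0)$ is a well-defined chain map and in fact a quasi-isomorphism.

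For the bracket I would use formula~(\ref{bracket ker}) from the proof of Proposition~\ref{q iso1}: the first component of $[\alpha,\beta]_L$ is a sum of two $D'_K$-exact terms plus $-[A,B]_{\mathrm{End}E}$, so $-Q$ applied to it equals $Q([A,B]_{\mathrm{End}E})$. Since the bracket descends to $D'_K$-cohomology, this equals $[QA,QB]_{\mathrm{End}E}=[-QA,-QB]_{\mathrm{End}E}$, the first component of the target bracket $[M\alpha,M\beta]$; the second component matches trivially as the target bracket has no cross term between $\mathbb{H}^*_{D'_K}$ and $A^*(TX)$.

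Finally, for the quasi-isomorphism I would apply the five lemma to a pair of short exact sequences of complexes. The factor $\mathrm{Ker}D'_K$ is a subcomplex of the source because $d_L(A,0)=(D''A,0)$ and $D'_KD''A=0$, with quotient complex $(A^*(TX),\overline\partial_{TX})$; the analogous sequence on the target has subcomplex $(\mathbb{H}^*_{D'_K},0)$ and the same quotient. The morphism $M$ fits into a commutative ladder between these two sequences, with left vertical arrow $-Q$ (a quasi-isomorphism by Lemma~\ref{formal}) and right vertical arrow $Id$, so the five lemma applied to the long exact cohomology sequences yields that $M$ is a quasi-isomorphism. The main obstacle is the first-row chain map verification, where the $D'_KD''$-lemma is essential; the bracket compatibility and the five-lemma conclusion are then formal.
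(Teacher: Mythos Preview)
Your argument is correct. Parts (i) and (ii) are essentially the paper's proof: the chain-map verification via the $D'_KD''$-lemma applied to $D''A$, and the bracket compatibility via formula~(\ref{bracket ker}), match the paper line for line.

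For part (iii) you take a genuinely different route. The paper proves surjectivity and injectivity on cohomology by explicit element-chasing: given a class $([A]_{D'_K},\phi)$ it builds a preimage by applying the $D'_KD''$-lemma to $-D''A+D'_K(\phi\lrcorner\theta)$, and given $(A,\phi)$ mapping to an exact class it again invokes the $D'_KD''$-lemma to produce a $d_L$-primitive in $\mathrm{Ker}D'_K\oplus A^*(TX)$. Your approach instead recognises that $(\mathrm{Ker}D'_K,D'')\hookrightarrow(\mathrm{Ker}D'_K\oplus A^*(TX),d_L)\twoheadrightarrow(A^*(TX),\overline\partial_{TX})$ is a short exact sequence of complexes (the off-diagonal term $D'_K(\phi\lrcorner\theta)$ of $d_L$ vanishes on the subcomplex and is killed by the quotient), matches it with the obvious split sequence on the target, and then appeals to the five lemma together with the already-proved quasi-isomorphism $Q\colon(\mathrm{Ker}D'_K,D'')\to(\mathbb{H}^*_{D'_K},0)$ from Lemma~\ref{formal}. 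This is shorter and more conceptual: it isolates the only nontrivial content as Lemma~\ref{formal} and avoids re-running the $D'_KD''$-lemma. The paper's hands-on construction, on the other hand, gives explicit representatives and primitives, which can be useful if one later wants concrete formulas rather than mere existence.
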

\begin{proof}
We first show that 
$\begin{pmatrix}
-Q&0\\
0&Id_{TX}
\end{pmatrix}$
is a morphism of DG vector spaces. Let $\alpha:=(A,\phi)\in\big(\mathrm{Ker}D'_K\oplus A^*(TX)\big).$ We have,
\begin{align*}
\begin{pmatrix}
0&0\\
0&\overline\partial_{TX}
\end{pmatrix}
\begin{pmatrix}
-Q&0\\
0&Id_{TX}
\end{pmatrix}
\begin{pmatrix}
A\\
\phi
\end{pmatrix}
&=
\begin{pmatrix}
0&0\\
0&\overline\partial_{TX}
\end{pmatrix}
\begin{pmatrix}
[A]_{D'_K}\\
\phi
\end{pmatrix}
=\begin{pmatrix}
0\\
\overline\partial_{TX}\phi
\end{pmatrix},
\\
\begin{pmatrix}
-Q&0\\
0&Id_{TX}
\end{pmatrix}
\circ d_L
\begin{pmatrix}
A\\
\phi
\end{pmatrix}
&=
\begin{pmatrix}
-Q&0\\
0&Id_{TX}
\end{pmatrix}
\begin{pmatrix}
D''A+D'_K(\phi\lrcorner \theta)\\
\overline\partial_{TX}\phi
\end{pmatrix}\\
&=
\begin{pmatrix}
[D''A+D'_K(\phi\lrcorner \theta)]_{D'_K}\\
\overline\partial_{TX}\phi
\end{pmatrix}.
\end{align*}
Since $A\in\mathrm{Ker}D'_K$, we can apply  $D'_KD''$-lemma to $D''A$. Hence there is a $B\in A^*(\mathrm{End}E)$ such that $D''A=D'_KD''B.$ Therefore
\begin{equation*}
[D''A+D'_K(\phi\lrcorner \theta)]_{D'_K}=[D'_KD''B+D'_K(\phi\lrcorner \theta)]_{D'_K}=[D_K(D''B+\phi\lrcorner \theta)]_{D'_K}=0.
\end{equation*}
Hence 
$\begin{pmatrix}
-Q&0\\
0&Id_{TX}
\end{pmatrix}$
is a morphism of DG vector spaces. We next show that it is compatible with the brackets.
Let $\alpha:=(A,\phi), \beta:=(B,\psi)\in\mathrm{Ker}D'_K\oplus A^*(TX)$. By (\ref{bracket ker}), we have
\begin{align*}
[\alpha,\beta]_L=
\begin{pmatrix}
(-1)^iD'_K(\psi\lrcorner A)-(-1)^{(i+1)j}D'_K(\phi\lrcorner B)-[A,B]\\
[\phi,\psi]_{SN}
\end{pmatrix}.
\end{align*}
Hence we have
\begin{align*}
\begin{pmatrix}
-Q&0\\
0&Id_{TX}
\end{pmatrix}
[\alpha,\beta]_L
&=
\begin{pmatrix}
-Q&0\\
0&Id_{TX}
\end{pmatrix}
\begin{pmatrix}
(-1)^iD'_K(\psi\lrcorner A)-(-1)^{(i+1)j}D'_K(\phi\lrcorner B)-[A,B]\\
[\phi,\psi]_{SN}
\end{pmatrix}\\
&=
\begin{pmatrix}
-[(-1)^iD'_K(\psi\lrcorner A)-(-1)^{(i+1)j}D'_K(\phi\lrcorner B)-[A,B]_{\mathrm{End}E}]_{D'_K}\\
[\phi,\psi]_{SN}
\end{pmatrix}\\
&=
\begin{pmatrix}
[[A,B]_{\mathrm{End}E}]_{D'_K}\\
[\phi,\psi]_{SN}
\end{pmatrix}
=
\begin{pmatrix}
[[A]_{D'_K},[B]_{D'_K}]_{\mathrm{End}E}\\
[\phi,\psi]_{SN}
\end{pmatrix}.
\end{align*}
Hence $\begin{pmatrix}
-Q&0\\
0&Id_{TX}
\end{pmatrix}$ is a morphism of DGLA.\par
We next show that 
$\begin{pmatrix}
-Q&0\\
0&Id_{TX}
\end{pmatrix}$
is a quasi-isomorphism.\par
$H^*\bigg(\begin{pmatrix}
-Q&0\\
0&Id_{TX}
\end{pmatrix}\bigg)$ is surjective: Let $([A]_{D'_K},\phi)\in \bigg(\mathbb{H}_{D'_K}^*\oplus A^*(TX)\bigg)\cap \mathrm{Ker}
\begin{pmatrix}
0&0\\
0&\overline\partial_{TX}
\end{pmatrix}$. We first show that 
\begin{equation*}
-D''A+D'_K(\phi\lrcorner\theta)\in\mathrm{Ker}D''\cap\mathrm{Ker}D'_K\cap\mathrm{im}D.
\end{equation*} 
Since $A\in\mathrm{Ker}D'_K$, $-D''A+D'_K(\phi\lrcorner\theta)\in\mathrm{Ker}D'_K$. \par
Since $\overline\partial_{TX}\phi=0,$ we have
\begin{align*}
D''(\phi\lrcorner\theta)&=\overline\partial_{\mathrm{End}E}(\phi\lrcorner\theta)+[\theta,\phi\lrcorner\theta]_{\mathrm{End}E}\\
&=\overline\partial_{TX}\phi\lrcorner\theta-\phi\lrcorner\overline\partial_{\mathrm{End}E}\theta+\frac{1}{2}\phi\lrcorner[\theta,\theta]_{\mathrm{End}E}\\
&=0.
\end{align*}
Hence
\begin{align*}
D''(-D''A+D'_K(\phi\lrcorner\theta))=-D'_KD''(\phi\lrcorner\theta)=0.
\end{align*}
Moreover, we have
\begin{equation*}
D(-A+\phi\lrcorner\theta)=D''A+D'_K(\phi\lrcorner\theta).
\end{equation*}
Hence we proved $-D''A+D'_K(\phi\lrcorner\theta)\in\mathrm{Ker}D''\cap\mathrm{Ker}D'_K\cap\mathrm{im}D$. Therefore, we can apply $D'_KD''$-lemma to $-D''A+D'_K(\phi\lrcorner\theta)$. Hence there is a $B\in A^*(\mathrm{End}E)$ such that
\begin{equation*}
-D''A+D'_K(\phi\lrcorner\theta)=D''D'_KB.
\end{equation*}
Equivalently, we have
\begin{equation*}
-D''(A+D'_KB)+D'_K(\phi\lrcorner\theta)=0.
\end{equation*}
Therefore
\begin{align*}
\begin{pmatrix}
-A-D'_KB\\
\phi
\end{pmatrix}
\in\mathrm{Ker}D'_K\oplus A^*(TX)\cap \mathrm{Ker}d_L,
\end{align*}
and
\begin{equation*}
\begin{pmatrix}
-Q&0\\
0&Id_{TX}
\end{pmatrix}
\begin{pmatrix}
-A-D'_KB\\
\phi
\end{pmatrix}
=
\begin{pmatrix}
[A+D'_KB]_{D'_K}\\
\phi
\end{pmatrix}
=
\begin{pmatrix}
[A]_{D'_K}\\
\phi
\end{pmatrix}.
\end{equation*}
Hence $H^*\bigg( \begin{pmatrix}
-Q&0\\
0&Id_{TX}
\end{pmatrix}\bigg)$
is surjective.\par
 $H^*\bigg( \begin{pmatrix}
-Q&0\\
0&Id_{TX}
\end{pmatrix}\bigg)$
is injective: Let $(A,\phi)\in\bigg(\mathrm{Ker}D'_K\oplus A^*(TX)\bigg)\cap\mathrm{Ker}d_L$. We assume that the cohomology class of $([A]_{D'_K},\phi)$  in \bigg($\mathbb{H}^*_{D'_K}\oplus A^*(TX),
\begin{pmatrix}
0&0\\
0&\overline\partial_{TX}
\end{pmatrix}\bigg)$
is 0.
Hence there exsit a $B\in A^*(\mathrm{End}E)$ and a $\psi\in A^*(TX)$  such that
\begin{align*}
 A&=D'_K B,\\ 
 \phi&=\overline\partial_{TX}\psi.
 \end{align*}
 We show that 
 \begin{equation*}
 A-D'_K(\psi\lrcorner\theta)\in\mathrm{Ker}D''\cap\mathrm{Ker}D'_K\cap\mathrm{im}D'_K. 
 \end{equation*}
 Since $A=D'_KB$ and $A\in\mathrm{Ker}D'_K$, $A-D'_K(\psi\lrcorner\theta)\in\mathrm{Ker}D'_K\cap\mathrm{im}D'_K$. We also have
 \begin{align*}
 D''(A-D'_K(\psi\lrcorner\theta))&=D''A+D'_KD''(\psi\lrcorner\theta)\\
 &=D''A+D'_K(\overline\partial_{TX}\psi\lrcorner\theta+\frac{1}{2}\psi\lrcorner[\theta,\theta]_{\mathrm{End}E})\\
 &=D''A+D'_K(\phi\lrcorner\theta).
  \end{align*}
  Since $(A,\phi)\in\mathrm{Ker}d_L$, we have
  \begin{equation*}
  d_L
  \begin{pmatrix}
  A\\
  \phi
 \end{pmatrix}
 =
 \begin{pmatrix}
 D''A+D'_K(\phi\lrcorner\theta)\\
 \overline\partial_{TX}\phi
  \end{pmatrix}
  =0.
  \end{equation*}
  Therefore we have
  \begin{equation*}
  D''(A-D'_K(\psi\lrcorner\theta)= D''A+D'_K(\phi\lrcorner\theta)=0.
  \end{equation*}
  Hence we showed that $A-D'_K(\psi\lrcorner\theta)\in\mathrm{Ker}D''\cap\mathrm{Ker}D'_K\cap\mathrm{im}D'_K$. Therefore we can apply $D'_KD''$-lemma to $A-D'_K(\psi\lrcorner\theta)$. Hence there exists a $C\in A^*(\mathrm{End}E)$ such that
  \begin{equation*}
  A-D'_K(\psi\lrcorner\theta)=D''D'_KC.
   \end{equation*}
   We note that $(D'_KC,\psi)\in\mathrm{Ker}D'_K\oplus A^*(TX)$ and 
   \begin{equation*}
   d_L
   \begin{pmatrix}
   D'_KC\\
   \psi
  \end{pmatrix}
  =
  \begin{pmatrix}
  D''D'_KC+D'_K(\psi\lrcorner\theta)\\
  \overline\partial_{TX}\psi
   \end{pmatrix}
   =
   \begin{pmatrix}
   A\\
   \phi
   \end{pmatrix}.
   \end{equation*}
  Therefore the cohomology class of $(A,\phi)$ in $(\mathrm{Ker}D'_K\oplus A^*(TX),d_L)$ is 0. Hence  $H^*\bigg( \begin{pmatrix}
-Q&0\\
0&Id_{TX}
\end{pmatrix}\bigg)$
is injective.
 \end{proof}
\begin{proof}[Proof of Theorem \ref{q iso}]
Combining Lemma \ref{formal}, Proposition \ref{q iso1}, and Proposition \ref{q iso2} we have the following chain of DGLAs
\begin{align*}
(L,[,]_L,d_L)&\leftarrow(\mathrm{Ker}D'_K\oplus A^*(TX),[,]_L,d_L)\\
&\to\bigg(\mathbb{H}^*_{D'_K}\oplus A^*(TX),[,]_{\mathrm{End}E}\oplus [,]_{SN},
\begin{pmatrix}
0&0\\
0&\overline\partial_{TX}
\end{pmatrix}\bigg)\\
&\leftarrow \bigg(\mathrm{Ker}D'_K\oplus A^*(TX),[,]_{\mathrm{End}E}\oplus [,]_{SN},
\begin{pmatrix}
D&0\\
0&\overline\partial_{TX}
\end{pmatrix}\bigg)\\
&\to\bigg(A^*(\mathrm{End}E)\oplus A^*(TX),[,]_{\mathrm{End}E}\oplus [,]_{SN},
\begin{pmatrix}
D&0\\
0&\overline\partial_{TX}
\end{pmatrix}\bigg)
\end{align*}
such that each morphism is quasi-isomorphism. 
Hence the claim is proved.
\end{proof}
\begin{corollary}\label{q iso3}
$(L,[,]_L,d_L)$ is quasi-isomorphic to $(A^*(\mathrm{End}E),[,]_{\mathrm{End}E},D'')\oplus(A^*(TX),[,]_{SN},\overline\partial_{TX})$.
\end{corollary}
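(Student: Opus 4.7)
The plan is to exploit the formality package already assembled in Lemma \ref{formal} and splice it into the zigzag of Theorem \ref{q iso}. The corollary is essentially obtained by replacing $D$ with $D''$ in the $\mathrm{End}E$ factor, and this replacement is governed by the sub-DGLA $(\mathrm{Ker}\,D'_K,D'')$ which sits inside both $(A^*(\mathrm{End}E),D)$ and $(A^*(\mathrm{End}E),D'')$.

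First I would check that $\mathrm{Ker}\,D'_K\subset A^*(\mathrm{End}E)$ is stable under the bracket $[,]_{\mathrm{End}E}$. Since $D'_K$ is a derivation of $[,]_{\mathrm{End}E}$, for $A,B\in\mathrm{Ker}\,D'_K$ one has $D'_K[A,B]_{\mathrm{End}E}=[D'_KA,B]_{\mathrm{End}E}\pm[A,D'_KB]_{\mathrm{End}E}=0$, so $(\mathrm{Ker}\,D'_K,[,]_{\mathrm{End}E},D'')$ is a sub-DGLA. Moreover, on $\mathrm{Ker}\,D'_K$ we have $D=D''+D'_K=D''$, so the two canonical inclusions
\[
j_1\colon(\mathrm{Ker}\,D'_K,[,]_{\mathrm{End}E},D'')\hookrightarrow(A^*(\mathrm{End}E),[,]_{\mathrm{End}E},D),\qquad j_2\colon(\mathrm{Ker}\,D'_K,[,]_{\mathrm{End}E},D'')\hookrightarrow(A^*(\mathrm{End}E),[,]_{\mathrm{End}E},D'')
\]
are DGLA morphisms. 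The induced metric on $\mathrm{End}E=E^*\otimes E$ is harmonic, so Lemma \ref{formal} applies to $\mathrm{End}E$ and tells us both $j_1$ and $j_2$ are quasi-isomorphisms of the underlying DG vector spaces, hence DGLA quasi-isomorphisms.

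Next I would direct-sum with the identity on the Kodaira-Spencer DGLA $(A^*(TX),[,]_{SN},\overline\partial_{TX})$. Since cohomology commutes with direct sums, tensoring a DGLA quasi-isomorphism with $\mathrm{id}$ on another DGLA produces a DGLA quasi-isomorphism. This yields a zigzag
\begin{align*}
\bigl(A^*(\mathrm{End}E),D\bigr)\oplus\bigl(A^*(TX),\overline\partial_{TX}\bigr)
&\xleftarrow{\,j_1\oplus\mathrm{id}\,}\bigl(\mathrm{Ker}\,D'_K,D''\bigr)\oplus\bigl(A^*(TX),\overline\partial_{TX}\bigr)\\
&\xrightarrow{\,j_2\oplus\mathrm{id}\,}\bigl(A^*(\mathrm{End}E),D''\bigr)\oplus\bigl(A^*(TX),\overline\partial_{TX}\bigr)
\end{align*}
of DGLA quasi-isomorphisms, where the brackets are suppressed for readability.

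Finally, Theorem \ref{q iso} furnishes a zigzag of DGLA quasi-isomorphisms between $(L,[,]_L,d_L)$ and the left-hand end of the display above. Concatenating the two zigzags establishes the quasi-isomorphism claimed in the corollary. There is no real obstacle here: the sub-DGLA check is a one-line derivation property of $D'_K$, the quasi-isomorphism at the DG vector space level is exactly Lemma \ref{formal}, and the direct-sum step is formal since $H^*$ is additive. The only thing worth stating explicitly is the compatibility $D\vert_{\mathrm{Ker}\,D'_K}=D''\vert_{\mathrm{Ker}\,D'_K}$, which is what allows the single DGLA $(\mathrm{Ker}\,D'_K,D'')$ to mediate between the $D$- and $D''$-structures.
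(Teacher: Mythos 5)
Your proposal is correct and is essentially the paper's (implicit) argument: the corollary follows from the zigzag in the proof of Theorem \ref{q iso} by using the quasi-isomorphism $(\mathrm{Ker}\,D'_K,D'')\to(A^*(\mathrm{End}E),D'')$ from Lemma \ref{formal} in place of the $D$-version, which is exactly the bridge $(\mathrm{Ker}\,D'_K,D'')$ (with $D\vert_{\mathrm{Ker}\,D'_K}=D''\vert_{\mathrm{Ker}\,D'_K}$) that you exploit. The only cosmetic difference is that you concatenate an extra roof onto the completed zigzag of Theorem \ref{q iso} instead of swapping its final arrow.
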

Let $Kur_X$ be the Kurainshi space of $X$, $Kur_{(E,\theta)}$ be the Kuranishi space of the Higgs bundle $(E,\theta)$, and $Kur_{(E,D)}$ be the Kuranishi space of the flat bundle $(E, D).$\par
Combining Theorem \ref{Main GM} and Theorem \ref{q iso}, we have the following theorem.
\begin{theorem}\label{main}
Let $(X,\omega)$ be a compact K\"ahler manifold, $(E,\overline\partial_E,\theta)$ be a Higgs bundle over $X$ and, $K$ be a harmonic metric. Then
\begin{align*}
(Kur_{(X,E,\theta)},0)&\simeq(Kur_{(E,\theta)}\times Kur_X,0),\\
(Kur_{(X,E,\theta)},0)&\simeq(Kur_{(E,D)}\times Kur_X,0)
\end{align*}
holds as germs of analytic spaces.
\end{theorem}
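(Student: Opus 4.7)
The plan is to chain together three inputs that are already in place: Theorem \ref{q iso} and Corollary \ref{q iso3}, the homotopy invariance of the Kuranishi germ (Theorem \ref{Main GM}), and the general fact that the Kuranishi functor converts direct sums of DGLAs into direct products of germs.

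First I would write the two quasi-isomorphisms explicitly:
\begin{align*}
(L,[,]_L,d_L)&\sim(A^*(\mathrm{End}E),[,]_{\mathrm{End}E},D)\oplus(A^*(TX),[,]_{SN},\overline\partial_{TX}),\\
(L,[,]_L,d_L)&\sim(A^*(\mathrm{End}E),[,]_{\mathrm{End}E},D'')\oplus(A^*(TX),[,]_{SN},\overline\partial_{TX}),
\end{align*}
the first being Theorem \ref{q iso} and the second being Corollary \ref{q iso3}. Since each of the five DGLAs in the zig-zag chain given in the proof of Theorem \ref{q iso} is an analytic DGLA for the standard Sobolev norms (the cohomologies are finite-dimensional by Hodge theory and the splittings are the standard Hodge decompositions), Theorem \ref{Main GM} applies term-by-term along the chain, so I obtain analytic isomorphisms of germs
\begin{equation*}
(Kur_{(X,E,\theta)},0)\simeq(Kur_{(A^*(\mathrm{End}E),D)\oplus(A^*(TX),\overline\partial_{TX})},0)
\end{equation*}
and similarly with $D$ replaced by $D''$.

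Next I would verify the identification of the Kuranishi spaces on the right-hand side. For a direct sum DGLA $L_1\oplus L_2$ with zero cross-brackets, the Maurer-Cartan equation decouples: a pair $(\alpha,\beta)\in (L_1^1\oplus L_2^1)\otimes m_A$ is Maurer-Cartan if and only if $\alpha$ and $\beta$ are individually Maurer-Cartan, because $d(\alpha,\beta)+\tfrac12[(\alpha,\beta),(\alpha,\beta)]=(d_1\alpha+\tfrac12[\alpha,\alpha]_1,\,d_2\beta+\tfrac12[\beta,\beta]_2)$. Consequently the functor $Y_{L_1\oplus L_2}$ is naturally isomorphic to $Y_{L_1}\times Y_{L_2}$, and by pro-representability the complete local ring $R_{L_1\oplus L_2}$ equals the completed tensor product $\widehat R_{L_1}\widehat\otimes\widehat R_{L_2}$. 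Passing back through the analytic-local-ring equivalence this gives $(Kur_{L_1\oplus L_2},0)\simeq(Kur_{L_1}\times Kur_{L_2},0)$. Applying this with $(L_1,L_2)=((A^*(\mathrm{End}E),D''),(A^*(TX),\overline\partial_{TX}))$ identifies the right-hand factor with $Kur_{(E,\theta)}\times Kur_X$, since $(A^*(\mathrm{End}E),[,]_{\mathrm{End}E},D'')$ and $(A^*(TX),[,]_{SN},\overline\partial_{TX})$ are by construction the DGLAs controlling deformations of the Higgs bundle $(E,\theta)$ and of the complex manifold $X$. The same argument with $D$ gives $Kur_{(E,D)}\times Kur_X$, where $(A^*(\mathrm{End}E),[,]_{\mathrm{End}E},D)$ is the DGLA governing deformations of the flat bundle $(E,D)$.

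Most of the work is already done, so the only non-mechanical point is the decoupling of Maurer-Cartan elements for a direct sum and the check that the Kuranishi construction respects it; I expect this to be the step where I need to be careful, because strictly the germ $(Kur_L,0)$ is built via a Kuranishi-type inclusion $H[\epsilon(t),\epsilon(t)]=0$, so one should either argue on the level of the pro-representing rings $R_L$ (cleanest route, via the functor $Y_L$) or verify directly that for the direct-sum DGLA the Kuranishi map $\epsilon(t)$ splits into the direct sum of the Kuranishi maps of the two factors, using that the Green operator and harmonic projection of $d_{L_1}\oplus d_{L_2}$ decompose as direct sums. Either way the conclusion drops out, completing the proof of Theorem \ref{main}.
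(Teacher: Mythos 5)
Your proposal is correct and follows essentially the same route as the paper: the paper's proof is simply the combination of Theorem \ref{Main GM} with Theorem \ref{q iso} (and Corollary \ref{q iso3}), leaving the direct-sum-to-product step and the identification of the factor DGLAs implicit. Your explicit verification that the Maurer--Cartan equation decouples for a direct sum, so that $Y_{L_1\oplus L_2}\simeq Y_{L_1}\times Y_{L_2}$ and the pro-representing ring is the completed tensor product, is exactly the routine detail the paper omits.
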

Theorem \ref{main} predicts that once we construct a moduli space of a pair of a compact K\"ahler manifold and a polystable Higgs bundle with vanishing Chern classes,  such moduli space should locally decompose to the direct product of the Kuranishi space of the manifold and the Kuranishi space of the Higgs bundle which we cannot expect globally.\par
We have some consequences from Theorem \ref{main} for specific cases. Let $M$ be a Riemann surface with genus $g \ge 2$ and $(E,\overline\partial_E,\theta)$ be a stable Higgs bundle of degree 0. Under these assumptions, each deformations of $M$ and $(E,\overline\partial_E,\theta)$ are unobstruced. Hence $Kur_M$ and $Kur_{(E,\theta)}$ are complex manifolds. Moreover, the dimensions of  $Kur_X$ is  $3g-3$ and $Kur_{(E,\theta)}$  is $2+r^2(2g-2)$ \cite{MK, N}. Here $r$ is the rank of $E$. The following is straightforward from Theorem \ref{main}.
\begin{corollary}\label{sub}
Let $M$ be a Riemann surface with genus $g \ge 2$ and $(E,\overline\partial_E,\theta)$ be a stable Higgs bundle of degree 0. Then the deformation of pair $(M, E,\theta)$ is unobstructed. Moreover, $Kur_{(M,E,\theta)}$ is a complex manifold and its dimension is $g(2r^2+3)-2r^2-1$.
\end{corollary}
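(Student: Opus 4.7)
The plan is to derive the corollary as a direct consequence of Theorem \ref{main} together with two classical smoothness results, one for Riemann surfaces and one for stable Higgs bundles on curves. First I would verify that Theorem \ref{main} applies: since $(E,\overline\partial_E,\theta)$ is a stable Higgs bundle of degree $0$ on a curve $M$, it is in particular polystable with $c_1(E)=0$, and on a Riemann surface $c_2$ is automatically zero. Therefore by the Simpson--Nitsure correspondence quoted in the excerpt, a harmonic metric $K$ exists, and Theorem \ref{main} yields an analytic isomorphism of germs
\[
(Kur_{(M,E,\theta)},0) \;\simeq\; (Kur_{(E,\theta)}\times Kur_M,\,0).
\]

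Next I would treat the two factors separately. For the base factor, $M$ is a compact Riemann surface of genus $g\ge 2$, so the obstruction space $H^2(M,TM)$ vanishes for dimensional reasons and the classical Kodaira--Spencer--Kuranishi theory gives that $Kur_M$ is a complex manifold of dimension $\dim H^1(M,TM)=3g-3$ by Riemann--Roch (this is the statement from \cite{MK} cited just before the corollary). For the Higgs factor, stability of $(E,\overline\partial_E,\theta)$ combined with Nitsure's result \cite{N} gives that the deformations of the Higgs bundle are unobstructed and that $Kur_{(E,\theta)}$ is a complex manifold of dimension $2+r^2(2g-2)$.

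Finally, I would assemble the pieces. Since the germ $(Kur_{(M,E,\theta)},0)$ is analytically isomorphic to the germ of a direct product of two complex manifolds, it is itself the germ of a complex manifold, and hence the deformations of the pair $(M,E,\theta)$ are unobstructed. The dimension adds:
\[
\dim Kur_{(M,E,\theta)} \;=\; (3g-3) + \bigl(2+r^2(2g-2)\bigr) \;=\; g(2r^2+3)-2r^2-1,
\]
which is the claimed formula. No substantial obstacle is expected: everything reduces, via Theorem \ref{main}, to the two well-established smoothness/dimension results for the individual factors, so the argument is essentially a bookkeeping check combined with the structural isomorphism already proved.
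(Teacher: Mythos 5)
Your argument is correct and matches the paper's own (very brief) justification: the paper likewise deduces the corollary directly from Theorem \ref{main}, using the classical facts that $\dim Kur_M=3g-3$ and $\dim Kur_{(E,\theta)}=2+r^2(2g-2)$ with both factors unobstructed, and then adds the dimensions. Your additional check that stability and degree $0$ on a curve give polystability with vanishing Chern classes, so that a harmonic metric exists and Theorem \ref{main} applies, is exactly the implicit step the paper relies on.
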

The Corollary predicts that the moduli space of a pair of Riemann surfaces and a stable Higgs bundle of degree 0 is smooth in a stable locus and its dimension is $g(2r^2+3)-2r^2-1$.

\end{document}